\documentclass[11pt,reqno]{amsart}

\usepackage[T1]{fontenc}
\usepackage{lmodern}
\usepackage{microtype}

\usepackage{amsmath,amssymb,amsthm,mathtools}
\usepackage{booktabs}

\usepackage[
  backend=biber,
  style=alphabetic,
  maxbibnames=10,
  giveninits=true
]{biblatex}
\usepackage{xcolor}
\usepackage[
   colorlinks=true, linkcolor=blue!45!black, citecolor=blue!45!black,
  urlcolor=blue!45!black ]{hyperref}

\numberwithin{equation}{section}

\newtheorem{theorem}{Theorem}[section]
\newtheorem{proposition}[theorem]{Proposition}
\newtheorem{lemma}[theorem]{Lemma}
\newtheorem{corollary}[theorem]{Corollary}

\theoremstyle{definition}
\newtheorem{definition}[theorem]{Definition}
\newtheorem{algorithm}[theorem]{Algorithm}
\theoremstyle{remark}
\newtheorem{remark}[theorem]{Remark}

\newcommand{\R}{\mathbb R}
\newcommand{\C}{\mathbb C}
\newcommand{\Hh}{\mathbb H}
\newcommand{\Oa}{\mathbb O}
\newcommand{\RePart}{\operatorname{Re}}
\newcommand{\ImPart}{\operatorname{Im}}
\newcommand{\Rk}{\operatorname{R}}
\newcommand{\ReC}{\operatorname{Re}_{\mathrm c}}
\newcommand{\ImC}{\operatorname{Im}_{\mathrm c}}
\newcommand{\AltC}{\mathsf C}
\newcommand{\AltR}{\mathsf R}
\newcommand{\Mul}{\mathsf M}

\title[Quasilinear Cayley--Dickson multiplication]{Quasilinear multiplication in
the real Cayley--Dickson tower}
\author{Harrison Lemley}
\email{hlemley@purdue.edu}
\date{September 2026}

\subjclass[2020]{Primary 17-08; Secondary 68W30, 68W40, 15A69}
\keywords{Cayley--Dickson algebras, fast multiplication, quasilinear algorithms,
bilinear complexity, arithmetic complexity, hypercomplex arithmetic}

\begin{document}

\begin{abstract}
Direct evaluation of the defining product in the real Cayley--Dickson algebra
$A_n$, of dimension $N=2^n$, has quadratic arithmetic complexity. This paper
gives a uniform algorithm for multiplication using $O(N\log N)$ real arithmetic
operations and $O(N)$ auxiliary storage. The algorithm reduces multiplication to
the alternating product on the imaginary subspace, then evaluates that product
by a two-call recursion over one fixed quadratic coefficient extension. For
$n\ge1$, the resulting bilinear algorithm uses at most $(9n-15)2^{n-1}+10$
input-dependent real multiplications, and for $n\ge3$, the specified arithmetic
schedule uses $(34n-83)2^{n-1}+50$ real operations in total. Under this counting
convention, the quasilinear schedule uses fewer operations than direct
multiplication for $N\ge16$ and than the uniform Cariow--Cariowa method for
$N\ge32$. The algorithm is implemented in the MIT-licensed C11 library
\texttt{fastCD}, with a NumPy-backed Python interface, and its results are checked
against an independent implementation of the defining recursion. In single-core
benchmarks against direct multiplication and the uniform Cariow--Cariowa method,
the quasilinear implementation had the lowest mean time of the three at every
tested dimension $N\ge32$, for both single and batched products, and was roughly
$16$ times faster than direct multiplication at $N=1024$.
\end{abstract}

\maketitle

\section{Introduction}

Starting with $A_0:=\R$, the Cayley--Dickson construction produces the familiar
algebras $A_1\cong\C$, $A_2\cong\Hh$, $A_3\cong\Oa$, and their
higher-dimensional successors. Each step doubles the underlying vector space
such that $A_n$ has dimension $N=2^n$. The defining product contains four
products in $A_{n-1}$, and thus direct recursive evaluation has quadratic
arithmetic complexity in $N$, requiring $N^2$ real multiplications. Standard
accounts of the construction and its low-dimensional members may be found in
\cite{Schafer1954,Baez2002}.

The quadratic cost of this multiplication has become a matter of practical
relevance in recent years. Cao, Yan, and Tan's \emph{Numerion} constructs
neural-network layers over Cayley--Dickson spaces and identifies the time and
memory required for higher-dimensional multiplication as a source of
computational overhead in their implementation \cite{CaoYanTan2026}.
Sedenion-valued models have also appeared in forecasting and traffic prediction,
while sedenion moments have been used in multiview image processing
\cite{SaoudAlMarzouqi2020,BojesomoLiatsisAlMarzouqi2020,ZhangEtAl2023},
suggesting growing relevance of the Cayley--Dickson tower in applications.

A long history of work towards improving the efficiency of multiplication can be
found in the literature. The signed-XOR description of basis multiplication
\cite{Bales2009,RenZhao2023} supports table-free methods \cite{ParkKang2026},
while earlier hypercomplex multiplication schemes use Walsh--Hadamard
factorizations \cite{RososhekLitvinChernyaeva2000}. Fixed-dimensional algorithms
are available for the octonions
\cite{CariowCariowa2012,CariowCariowa2013Erratum}, sedenions
\cite{CariowCariowa2013}, and the 32-dimensional Cayley--Dickson algebra
\cite{CariowCariowa2014}, with further octonion tensor-rank bounds given in
\cite{Jain2026}. The uniform Cariow--Cariowa method uses $N(N-1)/2+2$ real
multiplications together with a quadratic number of additions
\cite{CariowCariowa2015}, thus retaining quadratic arithmetic complexity. Work
aiming to accelerate high-dimensional Cayley--Dickson multiplication through GPU
parallelism has also been reported, but such parallel acceleration does not by
itself improve the arithmetic complexity of the underlying multiplication
algorithm \cite{YottaSpaceStatistics2026, PerdueCayleyKernel2026}. To the
author's knowledge, a uniform subquadratic algorithm for multiplying elements of
the standard real Cayley--Dickson tower has never appeared in the literature;
the question of a uniform subquadratic algorithm was posed explicitly on
MathOverflow by Kulkov in 2023 \cite{Kulkov2023}.

Vector-matrix descriptions of octonionic and related algebras originate with
Zorn, with subsequent developments by Paige and others
\cite{Zorn1931,Paige1957,Suh1969,DaboulDelbourgo1999,Gazeau2026}. The most
direct structural precedent here is the Brown--Hopkins noncommutative matrix
Jordan construction \cite{BrownHopkins1992}, which they iterate using the
trace-zero subspace with its commutator product and trace pairing. They give
explicit algebra isomorphisms relating this iteration to the Cayley--Dickson
process \cite[Lemma~3.17, Lemma~4.4, and Theorem~4.6]{BrownHopkins1995}. After
scalar extension to $\C$, Lemma~3.17 identifies the standard Cayley--Dickson
doubling parameter $-1$ with their normalized parameter $1$. Thus, up to the
corresponding coordinate changes and normalizations of the commutator and
bilinear form, the recursive algebraic structure used in this paper is classical
and is not claimed as a contribution of the present work.

The contribution here is a uniform quasilinear algorithm obtained by
reorganizing this structure. As in Karatsuba--Ofman multiplication
\cite{KaratsubaOfman1962}, the improvement comes from reducing the number of
recursive calls, in this case by passing to the alternating product on imaginary
elements. After adjoining one fixed central coefficient $\iota^2=-1$, each
complex recursive step uses two half-size evaluations and linear-time
recombination, while coefficientwise conjugate symmetry reduces the initial call
on real inputs to one complex evaluation. The same coefficient field is used
throughout, and full multiplication is recovered by scalar arithmetic and inner
products. The author is not aware of a previously published uniform quasilinear
upper bound for multiplication throughout the standard real Cayley--Dickson
tower.

\begin{theorem}\label{thm:intro-main}
Let $n\ge1$, let $A_n$ be the $2^n$-dimensional real Cayley--Dickson algebra,
and put $N=2^n$. Multiplication in $A_n$ admits a real bilinear algorithm of
length at most
\[
(9n-15)2^{n-1}+10\,.
\]
It can be performed using
\[
O(n2^n)=O(N\log N)
\]
real arithmetic operations, and the construction admits a matrix-free sequential
implementation using $O(N)$ auxiliary storage.
\end{theorem}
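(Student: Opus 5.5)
The plan is to prove the statement in three layers: a reduction of full multiplication to an alternating product on imaginary elements, a two-call recursion for that alternating product over $A_{n-1}\otimes_\R\C$, and then a solution of the resulting cost recurrences.

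\textbf{Reduction to the alternating product.} Write $x=x_0+x'$ and $y=y_0+y'$ with $x_0,y_0\in\R$ and $x',y'\in\ImPart A_n$. Let $x'\wedge y':=\tfrac12(x'y'-y'x')$, which is imaginary. For imaginary $x',y'$ the symmetric part of $x'y'$ is $-\langle x',y'\rangle$, where $\langle\cdot,\cdot\rangle$ is the Euclidean inner product in the standard basis. Hence
\[
xy=x_0y_0-\langle x',y'\rangle+x_0y'+y_0x'+x'\wedge y'.
\]
Everything except $x'\wedge y'$ costs $O(N)$ operations and $N+O(N)$ multiplications. So it suffices to evaluate the alternating bilinear map $W_n(x',y')=x'\wedge y'$ on $\ImPart A_n$, complexified to $\ImPart A_n\otimes_\R\C$ with the fixed central coefficient $\iota$, $\iota^2=-1$.

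\textbf{Two-call recursion.} Write imaginary elements as $x'=(a,b)$ and $y'=(c,d)$ with $a,c\in\ImPart A_{n-1}$ and $b,d\in A_{n-1}$. Using $(a,b)(c,d)=(ac-\bar d b,\ da+b\bar c)$ and $\bar a=-a$, $\bar c=-c$, antisymmetrizing gives
\[
x'\wedge y'=\bigl(a\wedge c-\tfrac12(\bar d b-\bar b d),\ da-bc\bigr).
\]
Next split $b=b_0+b'$ and $d=d_0+d'$, and expand $da$, $bc$ and $\bar d b-\bar b d$ with the reduction formula above. Each component then becomes a signed combination of the four alternating products $a\wedge c$, $b'\wedge d'$, $a\wedge d'$ and $b'\wedge c$, plus scalar multiples and inner products, which are linear-time.

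Over the extension these four products pair up:
\[
(a+\iota b')\wedge(c+\iota d')=a\wedge c-b'\wedge d'+\iota(a\wedge d'+b'\wedge c).
\]
A second call with one sign flipped, for example $(a+\iota b')\wedge(c-\iota d')$, supplies the complementary combinations. Because $\iota$ is central and the recursion is $\C$-bilinear, the same identity applies verbatim to complex inputs. This gives $T(n)=2T(n-1)+O(2^n)$ and hence $O(n2^n)$.

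For the top-level call on real inputs, the two complex calls are coefficientwise conjugates of one another, since they differ by $\iota\mapsto-\iota$. Hence one complex evaluation at level $n-1$ suffices.

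\textbf{Counting and storage.} Let $M_{\C}(k)$ be the number of input-dependent real multiplications for the complexified $W_k$. Complex scalar-by-vector multiplications and complex inner products are counted at their real cost, using Gauss's three-multiplication trick where it helps. The recursion then gives
\[
M_\C(k)=2M_\C(k-1)+\alpha 2^{k}+\beta,
\]
and the total is
\[
M(n)=M_\C(n-1)+(\text{linear real reduction cost}).
\]
Unrolling from explicit small base cases, namely $A_1$ where $W$ vanishes and $A_2$ or $A_3$ computed directly, should yield exactly $(9n-15)2^{n-1}+10$. I would fix $\alpha$ and $\beta$ by recounting the recombination step term by term and checking the closed form at $n=1,2,3$.

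For storage, the recursion is evaluated depth-first. Each level allocates workspace of size $O(2^k)$ for the combined inputs and outputs, and this is released before the sibling call. The total is therefore $\sum_k O(2^k)=O(N)$, and the algorithm is matrix-free.

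\textbf{Main obstacle.} I expect the hardest step to be the exact sign bookkeeping in the recombination. Specifically, one must check that the two conjugate-paired calls reproduce precisely the needed combinations of $a\wedge c$, $b'\wedge d'$, $a\wedge d'$ and $b'\wedge c$ in both components. One must also check that the scalar parts arising from $b_0,d_0$ and from the inner products do not force extra calls. Getting the sharp constant $9n-15$ depends on this bookkeeping, whereas the $O(N\log N)$ bound survives any constant-factor slack.
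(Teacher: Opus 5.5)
Your route is the paper's: the scalar--imaginary reduction, the splitting $b=b_0+b'$, pairing the four alternating products through $\iota$, conjugate symmetry at the real root, and depth-first workspace. Your formula for $x'\wedge y'$ is also right. Expanding it gives first component $D+b_0d'-d_0b'$ and second component $\langle b',c\rangle-\langle a,d'\rangle+d_0a-b_0c-H$, where $D=a\wedge c-b'\wedge d'$ and $H=a\wedge d'+b'\wedge c$.

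However, the second recursive call you actually write down fails. One has
\[
(a+\iota b')\wedge(c-\iota d')=(a\wedge c+b'\wedge d')+\iota\,(b'\wedge c-a\wedge d')\,.
\]
Take any combination $\alpha Z_1+\beta Z_2$ of this with $Z_1=D+\iota H$. The coefficients of $a\wedge c$ and of $b'\wedge c$ are $\alpha+\beta$ and $\iota(\alpha+\beta)$. Recovering $D$ needs these to be $1$ and $0$, and recovering $H$ needs $0$ and $1$. So neither is recoverable when the inputs are genuinely complex, which they are at every non-root node. This call is also not the coefficientwise conjugate of $Z_1$ for real inputs (that would be $(a-\iota b')\wedge(c+\iota d')$), so it contradicts your own real-root argument.

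The correct second call flips $\iota$ in both arguments: $Z_-=(a-\iota b')\wedge(c-\iota d')=D-\iota H$. Then $D=\tfrac12(Z_++Z_-)$ and $-H=\tfrac{\iota}{2}(Z_+-Z_-)$. For real inputs $Z_-$ is the coefficientwise complex conjugate of $Z_+$, which gives $D=\ReC Z_+$ and $H=\ImC Z_+$. For these calls to be defined, you also need the involution extended $\C$-linearly, so that $a\pm\iota b'$ is still imaginary, and the pairing extended bilinearly rather than as a Hermitian form.

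The sharp constant is asserted (``should yield exactly'') rather than derived, and one of your proposed base cases would break it. Here is the count that gives the statement. At a complex node of level $k\ge2$, with $m=2^{k-1}-1$, the only input-dependent products besides the two recursive calls are $b_0d'$, $d_0b'$, $d_0a$, $b_0c$ and the two pairings. That is $6m$ complex products at three real products each, so $\alpha=9$ and $\beta=-18$. With $M_{\C}(1)=0$ this gives $M_{\C}(k)=9(k-2)2^k+18$. At the real root these six length-$m$ products are real, and the reduction costs exactly $1+(N-1)+2(N-1)=3N-2$. Hence $M(n)=9(n-3)2^{n-1}+18+6(2^{n-1}-1)+3\cdot2^n-2=(9n-15)2^{n-1}+10$.

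Computing $W_3$ directly in the obvious way costs $42$ products. The recursion uses $30$ complex products at a complex node and $36$ real ones at the root. So an $A_3$ base case would exceed the bound already at $n=3$; use $W_1=0$. Finally, to read the count as a rank bound, you should state that every counted product, including Gauss's $(a+b)(c+d)$, is a linear form in $x$ times a linear form in $y$.
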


Theorem~\ref{thm:intro-main} is proved in Section~\ref{sec:complexity}: the
bilinear bound is Theorem~\ref{thm:rank}, the arithmetic bound is
Theorem~\ref{thm:arithmetic}, and the storage bound is
Proposition~\ref{prop:workspace}. The argument uses no associativity,
alternativity, or norm multiplicativity, and applies throughout the tower,
although specialized low-dimensional base cases may improve the constants. The
accompanying \texttt{fastCD} library provides C and NumPy-backed Python
interfaces \cite{HarrisEtAl2020} and comparisons with direct and uniform
Cariow--Cariowa multiplication. In the reported benchmarks, quasilinear
multiplication had the lowest mean time among the three methods at every tested
dimension $N\ge32$ for both single and batched products, while direct and
Cariow--Cariowa multiplication were faster at some lower dimensions.

Sections~\ref{sec:preliminaries} and~\ref{sec:recursion} derive the recursion
directly from the Cayley--Dickson product, Section~\ref{sec:complexity} proves
the arithmetic, bilinear, and storage bounds and gives explicit multiplication,
addition, and total-operation counts, and Section~\ref{sec:implementation}
describes the implementation and benchmarks. Finally, Section~\ref{sec:outlook}
suggests some questions for future work and Section~\ref{sec:disclosures} gives
statements and disclosures.

\section{Scalar and imaginary parts of Cayley--Dickson multiplication}
\label{sec:preliminaries}

The standard real Cayley--Dickson tower is used. Set $A_0:=\R$. Recursively,
\[
A_n:=A_{n-1}\oplus A_{n-1}\,,
\]
with conjugation and multiplication
\begin{equation}\label{eq:CD-product}
\overline{(a,b)}=(\bar a,-b)\,,
\qquad
(a,b)(c,d)=\bigl(ac-\bar d\,b,\;da+b\bar c\bigr)\,,
\end{equation}
with $\bar r = r$ for $r \in \R$. The standard basis identifies $A_n$ with
$\R^{2^n}$ and gives its Euclidean inner product $\langle\cdot,\cdot\rangle$ and
norm $\|\cdot\|$. Throughout, a scalar denotes the corresponding scalar multiple
of the unit. These scalars are central in every $A_n$.

For $x\in A_n$, define
\[
\RePart(x):=\frac{x+\bar x}{2}\,,
\qquad
\ImPart(x):=\frac{x-\bar x}{2}\,,
\]
identifying the real part with its scalar coefficient. Put
\[
V_n:=\ImPart(A_n)=\{x\in A_n:\bar x=-x\}\,.
\]
Then $A_n=\R\oplus V_n$.

\begin{lemma}[Standard involution identities]\label{lem:involution}
For every $n\ge0$ and $x,y\in A_n$,
\[
\overline{xy}=\bar y\,\bar x\,,
\qquad
x\bar x=\bar x x=\|x\|^2\,.
\]
In particular, if $u\in V_n$, then
\[
u^2=-\|u\|^2\,.
\]
\end{lemma}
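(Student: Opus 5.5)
We argue by induction on $n$, using only the recursive definitions \eqref{eq:CD-product}. For $n=0$ everything is trivial: $\bar r=r$, so $\overline{xy}=xy=yx=\bar y\bar x$, and $x\bar x=x^2=\|x\|^2$. We also record two facts, each proved by a one-line induction from $\overline{(a,b)}=(\bar a,-b)$. Conjugation is a real-linear involution, $\bar{\bar x}=x$. It is an isometry, since the coordinates of $\bar x$ agree with those of $x$ up to sign. These give $\|\bar x\|=\|x\|$ and $\bar{x}+x=2\RePart(x)$, a real scalar.

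For the anti-automorphism identity, assume it holds in $A_{n-1}$ and let $x=(a,b)$, $y=(c,d)$. Then
\[
\overline{xy}=\bigl(\overline{ac-\bar d b},\;-(da+b\bar c)\bigr)
=\bigl(\bar c\,\bar a-\bar b d,\;-da-b\bar c\bigr),
\]
by the inductive hypothesis and $\bar{\bar d}=d$. On the other hand, \eqref{eq:CD-product} gives
\[
\bar y\,\bar x=(\bar c,-d)(\bar a,-b)=\bigl(\bar c\bar a-\overline{(-b)}(-d),\;(-b)\bar c+(-d)\bar{\bar a}\bigr)
=\bigl(\bar c\bar a-\bar b d,\;-b\bar c-da\bigr).
\]
The two expressions agree. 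This step is pure bookkeeping with signs.

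For the norm identity, again write $x=(a,b)$. We compute
\[
x\bar x=(a,b)(\bar a,-b)=\bigl(a\bar a+\bar b b,\;-ba+b\bar a\bigr).
\]
By induction, $a\bar a=\|a\|^2$ and $\bar b b=\|b\|^2$, so the first component is $\|a\|^2+\|b\|^2=\|x\|^2$. The second component is $b(\bar a-a)$, since $ba$ and $b\bar a$ differ only through $a-\bar a$. We need this to vanish, and this is the main obstacle, because $b(\bar a-a)$ is not obviously zero without commutativity. The approach is to write $a=\alpha+u$ with $\alpha=\RePart(a)$ a central scalar and $u=\ImPart(a)$. Then $-ba+b\bar a=-b\alpha-bu+b\alpha-bu=-2bu$. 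That is nonzero in general, so the component as written does not cancel by this route alone.

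I would therefore recheck the product: the second component of $(a,b)(c,d)$ is $da+b\bar c$. With $c=\bar a$ and $d=-b$ this gives $-ba+b\bar{\bar a}=-ba+ba=0$. So the obstacle dissolves once $\bar{\bar a}=a$ is used correctly, and $x\bar x=\|x\|^2$. For the other order,
\[
\bar x x=(\bar a,-b)(a,b)=\bigl(\bar a a-\bar b(-b),\;b\bar a+(-b)\bar a\bigr)=\bigl(\|a\|^2+\|b\|^2,\,0\bigr),
\]
using the inductive hypothesis $\bar a a=\|a\|^2$. Finally, if $u\in V_n$ then $\bar u=-u$, so $u^2=-u\bar u=-\|u\|^2$.
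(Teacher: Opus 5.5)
Your proof is correct and is essentially the paper's argument. Like the paper, you induct on $n$ via \eqref{eq:CD-product}, compare $\overline{xy}$ with $(\bar c,-d)(\bar a,-b)$, and compute $(a,b)(\bar a,-b)=(a\bar a+\bar b b,0)$; you only add explicit use of $\bar{\bar x}=x$ and a direct check of the $\bar x x$ case, which the paper dismisses as ``the same.'' In the write-up, delete the detour where you first write the second component as $-ba+b\bar a$ and try to cancel it via $a=\alpha+u$, since the correct component is $(-b)a+b\bar{\bar a}=0$ directly. The isometry and $\RePart$ facts you record are never used and can also go.
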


\begin{proof}
The statements follow by induction from \eqref{eq:CD-product}. When $n=0$, $\bar
x = x$ and $xy = yx$, so $\overline{xy} = \bar y\,\bar x$ trivially. Now,
assuming $\overline{xy} = \bar y \bar x$ in $A_{n-1}$ and letting $x=(a,b)$ and
$y=(c,d)$ be elements of $A_n$, this inductive hypothesis gives
\[
\begin{aligned}
\overline{xy}
&=\bigl(\bar c\,\bar a-\bar b d,\;-da-b\bar c\bigr)\\
&=(\bar c,-d)(\bar a,-b)
=\bar y\,\bar x\,.
\end{aligned}
\]
For the quadratic identity,
\[
x\bar x
=(a,b)(\bar a,-b)
=\bigl(a\bar a+\bar b b,0\bigr)
=\bigl(\|a\|^2+\|b\|^2,0\bigr)\,.
\]
The calculation of $\bar x x$ is the same. If $u\in V_n$, then $\bar u=-u$, so
$u\bar u=\|u\|^2$ gives $-u^2=\|u\|^2$.
\end{proof}

\begin{definition}\label{def:alternating-product}
For $u,v\in V_n$, define
\begin{equation}\label{eq:cross-def}
u\times_n v:=\frac12(uv-vu)\,.
\end{equation}
\end{definition}

\begin{lemma}[Product of imaginary elements]\label{lem:pure-product}
For $u,v\in V_n$,
\begin{equation}\label{eq:pure-product}
uv=-\langle u,v\rangle+u\times_n v\,.
\end{equation}
Moreover, $u\times_n v\in V_n$ and $\times_n$ is bilinear and alternating.
\end{lemma}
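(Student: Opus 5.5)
The plan is to decompose $uv$ into its symmetric and antisymmetric parts,
\[
uv=\tfrac12(uv+vu)+\tfrac12(uv-vu)=\tfrac12(uv+vu)+u\times_n v\,,
\]
and then show that the symmetric part equals the scalar $-\langle u,v\rangle$. For this I will polarize the identity $w^2=-\|w\|^2$ from Lemma~\ref{lem:involution}. Since $V_n$ is a linear subspace, $u+v\in V_n$, so
\[
(u+v)^2=-\|u+v\|^2=-\|u\|^2-2\langle u,v\rangle-\|v\|^2\,.
\]
The product is bilinear, since \eqref{eq:CD-product} is bilinear by induction. Expanding the left side therefore gives $u^2+uv+vu+v^2=-\|u\|^2-\|v\|^2+(uv+vu)$. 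Comparing the two expressions yields $uv+vu=-2\langle u,v\rangle$, which proves \eqref{eq:pure-product}.

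Next I will check that $u\times_n v\in V_n$ using the anti-involution identity $\overline{xy}=\bar y\,\bar x$ from Lemma~\ref{lem:involution}. With $\bar u=-u$ and $\bar v=-v$, this gives $\overline{uv}=vu$ and $\overline{vu}=uv$. Hence
\[
\overline{u\times_n v}=\tfrac12(vu-uv)=-u\times_n v\,,
\]
so $u\times_n v$ is imaginary. Alternatively, \eqref{eq:pure-product} already splits $uv$ into a real part and a second summand, and the second summand is forced to be the imaginary part. Either route works.

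Bilinearity of $\times_n$ follows directly from bilinearity of the product. The alternating property $u\times_n u=\tfrac12(uu-uu)=0$ is immediate from the definition. The only point needing any care is bilinearity of the Cayley--Dickson product together with $\R$-linearity of conjugation. Both follow by a routine induction on \eqref{eq:CD-product}, since each component of the product is a sum of products of linear expressions in the inputs. I do not expect any real obstacle here. The one thing to confirm is that the polarization uses only the norm identity and not any associativity.
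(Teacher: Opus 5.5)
Your proof is correct and follows the paper's argument essentially step for step: you polarize $w^2=-\|w\|^2$ to get $uv+vu=-2\langle u,v\rangle$, split $uv$ into symmetric and antisymmetric parts, and use $\overline{xy}=\bar y\,\bar x$ to show $u\times_n v\in V_n$. Your parenthetical ``alternative'' route does not stand on its own, though: writing $uv=r+w$ with $r$ scalar forces $w=\ImPart(uv)$ only if you already know $r=\RePart(uv)$, and that needs an extra input such as $\overline{uv}=vu$, so the anti-involution argument should remain the actual proof.
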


\begin{proof}
Since $u+v\in V_n$, Lemma~\ref{lem:involution} gives
\[
(u+v)^2=-\|u+v\|^2\,.
\]
Expanding and using the corresponding identities for $u$ and $v$ gives
\[
-\|u\|^2+uv+vu-\|v\|^2
=
-\|u+v\|^2\,.
\]
Since
\[
\|u+v\|^2=\|u\|^2+2\langle u,v\rangle+\|v\|^2\,,
\]
it follows that
\[
uv+vu=-2\langle u,v\rangle\,.
\]
Decomposing $uv$ into its symmetric and antisymmetric parts now yields
\[
uv
=
\frac12(uv+vu)+\frac12(uv-vu)\,.
\]
The preceding identity and Definition~\ref{def:alternating-product} give
\eqref{eq:pure-product}.

Bilinearity of $\times_n$ follows immediately from bilinearity of
multiplication, and
\[
v\times_n u=\frac12(vu-uv)=-u\times_n v\,,
\]
so $\times_n$ is alternating and $u\times_n u=0$. Finally, because $\bar u=-u$,
$\bar v=-v$, and conjugation reverses products,
\[
\begin{aligned}
\overline{u\times_n v}
&=
\frac12\bigl(\overline{uv}-\overline{vu}\bigr)\\
&=
\frac12(vu-uv)\\
&=
-u\times_n v\,.
\end{aligned}
\]
Thus $u\times_n v\in V_n$.
\end{proof}

\begin{proposition}[Scalar--imaginary decomposition]
\label{prop:scalar--imaginary}
Let
\[
x=\alpha+X\,,
\qquad
y=\beta+Y\,,
\qquad
X,Y\in V_n\,.
\]
Then
\begin{equation}\label{eq:scalar--imaginary}
xy
=\bigl(\alpha\beta-\langle X,Y\rangle\bigr)
+\alpha Y+\beta X+X\times_nY\,.
\end{equation}
\end{proposition}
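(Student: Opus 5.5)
The plan is to expand the product bilinearly and then apply Lemma~\ref{lem:pure-product} to the single term involving two imaginary factors. Writing $x=\alpha+X$ and $y=\beta+Y$, distributivity of the Cayley--Dickson product, which is bilinear by \eqref{eq:CD-product}, gives
\[
xy=(\alpha+X)(\beta+Y)=\alpha\beta+\alpha Y+X\beta+XY\,.
\]
Here scalars are identified with scalar multiples of the unit.

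First I will simplify the three terms that contain a scalar factor. Scalars are central, as noted at the start of this section, and multiplying by a scalar multiple of the unit is the same as real scalar multiplication. Hence $\alpha\beta$ is the scalar $\alpha\beta$, $\alpha Y$ is the real multiple of $Y$, and $X\beta=\beta X$. If one wants to justify this directly rather than cite it, a short induction on \eqref{eq:CD-product} works: write the unit of $A_n$ as $(1,0)$, and note that $\bar r=r$ for real $r$ at every level.

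Next, Lemma~\ref{lem:pure-product} applies to $X,Y\in V_n$ and gives
\[
XY=-\langle X,Y\rangle+X\times_nY\,.
\]
Substituting this and collecting the scalar terms $\alpha\beta-\langle X,Y\rangle$ yields \eqref{eq:scalar--imaginary}.

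The only point needing any care is that scalar multiplication by the unit agrees with the algebra product and is central. The paper already asserts this, so I expect no real obstacle. If a check were needed, it is a one-line induction: $(r,0)(c,d)=(rc,dr)=(c,d)(r,0)$ for real $r$, using the inductive hypothesis in $A_{n-1}$.
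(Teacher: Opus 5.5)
Your proposal is correct and follows the paper's own proof: expand $(\alpha+X)(\beta+Y)$ bilinearly, then apply Lemma~\ref{lem:pure-product} to $XY$. Your induction showing that real scalars act centrally, $(r,0)(c,d)=(rc,dr)=(c,d)(r,0)$, supplies a detail the paper only asserts, and it is valid.
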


\begin{proof}
Expand $(\alpha+X)(\beta+Y)$ and apply Lemma~\ref{lem:pure-product} to $XY$.
\end{proof}

Thus, the only part of multiplication requiring a recursive Cayley--Dickson
calculation is the alternating product $\times_n$, and all remaining terms
require only $O(2^n)=O(N)$ real arithmetic operations.

\section{A two-call recursion}
\label{sec:recursion}

Fix $n\ge2$ and write $B=A_{n-1}$ and $V=V_{n-1}$. An element $(a,b)\in B\oplus
B \cong A_n$ is imaginary exactly when $a\in V$. Its second coordinate is
unrestricted. Hence every $X\in V_n$ has a unique expression
\begin{equation}\label{eq:imaginary-splitting}
X=(u,\gamma+s)\,,
\qquad
u,s\in V\,,
\quad
\gamma\in\R\,.
\end{equation}

\begin{proposition}[Doubled alternating product]\label{prop:doubled-alt}
Let
\[
X=(u,\gamma+s)\,,
\qquad
Y=(v,\delta+t)\,,
\]
where $u,v,s,t\in V$ and $\gamma,\delta\in\R$. Define
\begin{equation}\label{eq:DH}
\begin{aligned}
D&:=u\times_{n-1}v-s\times_{n-1}t\,,\\
H&:=u\times_{n-1}t+s\times_{n-1}v\,.
\end{aligned}
\end{equation}
Then
\begin{equation}\label{eq:doubled-alt}
\begin{aligned}
X\times_nY
=\Bigl(&D+\gamma t-\delta s\,,\\
&\bigl(\langle s,v\rangle-\langle t,u\rangle\bigr)
+\delta u-\gamma v-H\Bigr)\,.
\end{aligned}
\end{equation}
\end{proposition}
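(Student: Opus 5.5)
The plan is a direct computation. I will expand both $XY$ and $YX$ with the defining product \eqref{eq:CD-product} in $B\oplus B$ and take half the difference, as in Definition~\ref{def:alternating-product}. Every product of two elements of $B$ that appears will then be rewritten using Lemma~\ref{lem:pure-product} in $A_{n-1}$. No associativity is needed, because each term is a single product in $B$ of a scalar-plus-imaginary element with another such element.

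Write $b=\gamma+s$ and $e=\delta+t$, so that $X=(u,b)$ and $Y=(v,e)$. Since $\bar u=-u$ and $\bar v=-v$, \eqref{eq:CD-product} gives
\[
XY=(uv-\bar e\,b,\;eu-bv)\,,\qquad YX=(vu-\bar b\,e,\;bv-eu)\,.
\]
Halving the difference gives two coordinates:
\[
\text{first}=u\times_{n-1}v-\tfrac12(\bar e\,b-\bar b\,e)\,,\qquad \text{second}=eu-bv\,.
\]

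For the second coordinate, I expand $eu=\delta u+tu$ and $bv=\gamma v+sv$. Lemma~\ref{lem:pure-product} then gives
\[
tu=-\langle t,u\rangle+t\times_{n-1}u\,,\qquad sv=-\langle s,v\rangle+s\times_{n-1}v\,.
\]
Using antisymmetry, $t\times_{n-1}u=-u\times_{n-1}t$, and collecting terms yields
\[
\bigl(\langle s,v\rangle-\langle t,u\rangle\bigr)+\delta u-\gamma v-H\,.
\]

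For the first coordinate, I expand $\bar e\,b=(\delta-t)(\gamma+s)$ and $\bar b\,e=(\gamma-s)(\delta+t)$ bilinearly. The scalar terms $\gamma\delta$ cancel in the difference, and the linear terms contribute $2\delta s-2\gamma t$. The quadratic terms contribute $st-ts$, which equals $2\,s\times_{n-1}t$ because the inner-product parts from Lemma~\ref{lem:pure-product} cancel. Halving gives
\[
\tfrac12(\bar e\,b-\bar b\,e)=\delta s-\gamma t+s\times_{n-1}t\,,
\]
so the first coordinate is $D+\gamma t-\delta s$, which completes \eqref{eq:doubled-alt}.

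I expect no conceptual obstacle here. The main risk is sign bookkeeping: the placement of conjugates in $\bar d\,b$ and $b\bar c$ from \eqref{eq:CD-product}, the conjugate of $\gamma+s$ being $\gamma-s$, and the orientation of each alternating product. I will check the final formula against $n=2$, the quaternions, where $\times$ is the usual cross product, as a sanity test.
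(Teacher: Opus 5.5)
Your proposal is correct and takes essentially the same route as the paper's proof. Both expand $XY$ and $YX$ via \eqref{eq:CD-product}, take half the difference, and reduce the resulting products in $B$ using Lemma~\ref{lem:pure-product} and antisymmetry; your sign bookkeeping in both coordinates checks out, and renaming $d$ as $e$ is purely cosmetic.
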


\begin{proof}
Put $b=\gamma+s$ and $d=\delta+t$. By \eqref{eq:CD-product},
\[
XY=(uv-\bar d b,\;du-bv)\,,
\qquad
YX=(vu-\bar b d,\;bv-du)\,.
\]
Therefore
\begin{equation}\label{eq:cross-expanded}
X\times_nY
=\left(
\frac{uv-vu+\bar b d-\bar d b}{2},
\;du-bv
\right)\,.
\end{equation}
Since $\bar b=\gamma-s$ and $\bar d=\delta-t$,
\[
\frac{\bar b d-\bar d b}{2}
=\gamma t-\delta s-s\times_{n-1}t\,.
\]
The first coordinate of \eqref{eq:cross-expanded} is therefore
\[
u\times_{n-1}v-s\times_{n-1}t+\gamma t-\delta s\,.
\]
For the second coordinate, Lemma~\ref{lem:pure-product} gives
\[
\begin{aligned}
du-bv
={}&\delta u-\gamma v+tu-sv\\
={}&\bigl(\langle s,v\rangle-\langle t,u\rangle\bigr)
+\delta u-\gamma v-u\times_{n-1}t-s\times_{n-1}v\,.
\end{aligned}
\]
Substituting the definitions of $D$ and $H$ into these two coordinates proves
\eqref{eq:doubled-alt}.
\end{proof}

\begin{remark}[Relation to Brown--Hopkins]\label{rem:brown--hopkins}
Proposition~\ref{prop:doubled-alt} is the standard Cayley--Dickson coordinate
form of the trace-zero commutator recursion used in the Brown--Hopkins
construction, with the commutator normalized by a factor of $1/2$. Their
construction iterates the trace-zero subspace with its commutator product and
trace pairing, and their explicit isomorphisms identify the resulting algebras
with iterated Cayley--Dickson algebras \cite[Lemmas~3.17 and~4.4, and
Theorem~4.6]{BrownHopkins1995} after scalar extension to $\C$. Under these
identifications, trace-zero elements correspond to imaginary elements, the
commutator is twice the alternating product in this paper, and the trace pairing
is a constant multiple of the standard bilinear pairing used here. The
calculation above fixes the signs and normalization used in this paper; the
contribution here is the uniform quasilinear arithmetic schedule built from this
classical structure, while the structure itself is attributed to Brown--Hopkins.
\end{remark}

\subsection{A fixed quadratic coefficient extension}

Let
\[
A_n^{\C}:=A_n\otimes_{\R}\C\,,
\qquad
V_n^{\C}:=V_n\otimes_{\R}\C\,,
\]
and write $\iota$ for the central coefficient satisfying $\iota^2=-1$. The
Cayley--Dickson involution is extended $\C$-linearly. In particular, it fixes
$\iota$ and is distinct from coefficientwise complex conjugation. The
coefficient pairing is extended bilinearly,
\[
\langle z,w\rangle_{\mathrm{bil}}:=\sum_j z_jw_j\,,
\]
without conjugating either argument. The scalar--imaginary decomposition and
\eqref{eq:doubled-alt} remain valid after scalar extension, as they are
identities between bilinear maps with real structure constants. Let
\[
\AltC_n\colon V_n^{\C}\times V_n^{\C}\longrightarrow V_n^{\C}
\]
denote the complex-bilinear extension of $\times_n$.

Every recursive evaluation takes place over the same coefficient field
$\C=\R(\iota)$. Complex addition has constant real cost, multiplication by
$\iota$ requires only a coordinate exchange and a sign change, and complex
scalar multiplication can be expressed using three real products:
\begin{equation}\label{eq:three-real-complex}
\begin{gathered}
(a+\iota b)(c+\iota d)=(p-q)+\iota(r-p-q)\,,\\
p=ac\,,\qquad q=bd\,,\qquad r=(a+b)(c+d)\,.
\end{gathered}
\end{equation}

\subsection{The two-call formula and the specialization to real coefficients}

\begin{theorem}[Two-call formula]\label{thm:two-call}
Let $n\ge2$ and let
\[
X=(u,\gamma+s)\,,\qquad Y=(v,\delta+t)
\]
belong to $V_n^{\C}$, where $u,v,s,t\in V_{n-1}^{\C}$ and $\gamma,\delta\in\C$.
Define
\begin{equation}\label{eq:zpm}
Z_+:=\AltC_{n-1}(u+\iota s,v+\iota t)\,,
\qquad
Z_-:=\AltC_{n-1}(u-\iota s,v-\iota t)\,,
\end{equation}
and put
\[
\lambda:=\langle s,v\rangle_{\mathrm{bil}}
        -\langle t,u\rangle_{\mathrm{bil}}\,.
\]
Then
\begin{equation}\label{eq:two-call-formula}
\begin{aligned}
\AltC_n(X,Y)=\Bigl(&\frac{Z_++Z_-}{2}+\gamma t-\delta s\,,\\
&\lambda+\delta u-\gamma v+\frac{\iota}{2}(Z_+-Z_-)\Bigr)\,.
\end{aligned}
\end{equation}
Consequently, one evaluation of $\AltC_n$ requires only two evaluations of
$\AltC_{n-1}$ together with $O(2^n)$ complex arithmetic operations.
\end{theorem}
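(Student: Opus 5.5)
The plan is to derive \eqref{eq:two-call-formula} from Proposition~\ref{prop:doubled-alt}, which holds verbatim over $\C$ by the scalar-extension remark preceding the theorem. Comparing \eqref{eq:doubled-alt} with \eqref{eq:two-call-formula}, the scalar and linear terms $\gamma t-\delta s$, $\lambda$ and $\delta u-\gamma v$ already agree; the bilinear pairing $\langle\cdot,\cdot\rangle_{\mathrm{bil}}$ is exactly the bilinear extension of the real inner product appearing there. So it suffices to prove the two identities
\[
\frac{Z_++Z_-}{2}=D\,,\qquad \frac{\iota}{2}(Z_+-Z_-)=-H\,,
\]
where $D$ and $H$ are the complex-bilinear extensions of \eqref{eq:DH}, that is, $D=\AltC_{n-1}(u,v)-\AltC_{n-1}(s,t)$ and $H=\AltC_{n-1}(u,t)+\AltC_{n-1}(s,v)$.

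To get these, I will expand $Z_\pm$ using complex bilinearity of $\AltC_{n-1}$. Since $\iota$ is a central scalar coefficient and $\AltC_{n-1}$ is $\C$-bilinear,
\[
Z_\pm=\AltC_{n-1}(u,v)\pm\iota\bigl(\AltC_{n-1}(u,t)+\AltC_{n-1}(s,v)\bigr)+\iota^2\AltC_{n-1}(s,t)
=D\pm\iota H\,,
\]
using $\iota^2=-1$. Then $(Z_++Z_-)/2=D$, and $Z_+-Z_-=2\iota H$, so $\frac{\iota}{2}(Z_+-Z_-)=\iota^2H=-H$. Substituting into \eqref{eq:doubled-alt} gives \eqref{eq:two-call-formula}.

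For the complexity claim, I will observe that, given $Z_\pm$, every remaining term is elementary. The inputs $u\pm\iota s$ and $v\pm\iota t$, the scalar multiples $\gamma t,\delta s,\delta u,\gamma v$, the two bilinear pairings in $\lambda$, and the halving and multiplication by $\iota$ each take $O(\dim V_{n-1})=O(2^n)$ complex operations. The only nonlinear-cost work is the two calls producing $Z_+$ and $Z_-$. There is no genuine obstacle here. The one point needing care is that the pairing must be extended bilinearly, not sesquilinearly, and the involution $\C$-linearly, so that Proposition~\ref{prop:doubled-alt} transfers as an identity of $\C$-bilinear maps; I would state this explicitly.
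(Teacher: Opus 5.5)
Your proposal is correct and follows essentially the same route as the paper: expand $Z_\pm=D\pm\iota H$ by $\C$-bilinearity, solve for $D$ and $-H$, and substitute into the complexified form of \eqref{eq:doubled-alt}. The paper also checks explicitly that $u\pm\iota s$ and $v\pm\iota t$ lie in $V_{n-1}^{\C}$, so that the recursive calls are well defined; this is immediate because $V_{n-1}^{\C}$ is a complex subspace, but it is worth stating.
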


\begin{proof}
Let $D,H\in V_{n-1}^{\C}$ denote the complexified expressions in \eqref{eq:DH};
explicitly,
\[
\begin{aligned}
D&:=\AltC_{n-1}(u,v)-\AltC_{n-1}(s,t)\,,\\
H&:=\AltC_{n-1}(u,t)+\AltC_{n-1}(s,v)\,.
\end{aligned}
\]
Since $u,s\in V_{n-1}^{\C}$ and the Cayley--Dickson involution is extended
$\C$-linearly,
\[
\bar u=-u\,,
\qquad
\bar s=-s\,,
\]
and hence
\[
\overline{u\pm\iota s}
=\bar u\pm\iota\bar s
=-u\mp\iota s
=-(u\pm\iota s)\,.
\]
Thus $u\pm\iota s\in V_{n-1}^{\C}$, and likewise $v\pm\iota t\in V_{n-1}^{\C}$,
so both recursive calls in \eqref{eq:zpm} are well defined.

By complex bilinearity,
\[
\begin{aligned}
Z_\pm
&=\AltC_{n-1}(u\pm\iota s,v\pm\iota t)\\
&=\AltC_{n-1}(u,v)
 \pm\iota\AltC_{n-1}(u,t)
 \pm\iota\AltC_{n-1}(s,v)
 +\iota^2\AltC_{n-1}(s,t)\\
&=D\pm\iota H\,,
\end{aligned}
\]
where $\iota^2=-1$ was used in the last line. Therefore
\[
D=\frac{Z_++Z_-}{2}\,,
\qquad
-H=\frac{\iota}{2}(Z_+-Z_-)\,.
\]
Substituting these identities into the complexified form of
\eqref{eq:doubled-alt} gives
\[
\begin{aligned}
\AltC_n(X,Y)
=\Bigl(&\frac{Z_++Z_-}{2}+\gamma t-\delta s\,,\\
&\lambda+\delta u-\gamma v
+\frac{\iota}{2}(Z_+-Z_-)\Bigr)\,,
\end{aligned}
\]
which is \eqref{eq:two-call-formula}. All remaining work consists of
scalar--vector products, two bilinear pairings, and linear recombination, and
therefore requires $O(2^n)$ complex arithmetic operations.
\end{proof}

The space $V_1^{\C}$ is one-dimensional over $\C$, and $\AltC_1\equiv0$, forming
the base case of the recursion.

\begin{corollary}[Specialization to real coefficients]
\label{cor:real-root}
Let $n\ge2$ and let
\[
X=(u,\gamma+s)\,,\qquad
Y=(v,\delta+t)
\]
belong to $V_n$. Let
\[
Z:=\AltC_{n-1}(u+\iota s,v+\iota t)\,.
\]
Then
\begin{equation}\label{eq:real-root}
X\times_nY
=
\bigl(
\ReC Z+\gamma t-\delta s,\;
\lambda+\delta u-\gamma v-\ImC Z
\bigr)\,,
\end{equation}
where
\[
\lambda:=\langle s,v\rangle-\langle t,u\rangle
\]
and $\ReC,\ImC$ denote coefficientwise real and imaginary parts.
\end{corollary}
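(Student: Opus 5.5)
The plan is to derive the corollary directly from Theorem~\ref{thm:two-call} by exploiting coefficientwise complex conjugation. When $X,Y\in V_n$ have real coordinates, both complex calls $Z_\pm$ are determined by the single call $Z:=Z_+$.

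Let $\kappa$ denote coefficientwise complex conjugation on $V_{n-1}^{\C}$. The key observation is that $\AltC_{n-1}$ is the complex-bilinear extension of a real bilinear map with real structure constants. Hence
\[
\kappa\bigl(\AltC_{n-1}(p,q)\bigr)=\AltC_{n-1}(\kappa p,\kappa q)
\qquad\text{for all } p,q\in V_{n-1}^{\C}.
\]
To justify this, I would expand $p$ and $q$ in the real standard basis of $V_{n-1}$. Conjugation acts only on the complex coefficients, and the structure constants of $\times_{n-1}$ are real. Since $u,v,s,t$ have real coordinates, $\kappa(u+\iota s)=u-\iota s$, and likewise $\kappa(v+\iota t)=v-\iota t$. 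Therefore $Z_-=\kappa(Z_+)=\kappa(Z)$.

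Substituting into \eqref{eq:two-call-formula} gives
\[
\frac{Z_++Z_-}{2}=\frac{Z+\kappa Z}{2}=\ReC Z\,,\qquad Z_+-Z_-=2\iota\,\ImC Z\,,
\]
so $\tfrac{\iota}{2}(Z_+-Z_-)=\iota^2\ImC Z=-\ImC Z$. The remaining terms $\gamma t-\delta s$, $\delta u-\gamma v$, and $\lambda$ have real inputs. For real vectors the bilinear pairing $\langle\cdot,\cdot\rangle_{\mathrm{bil}}$ coincides with the Euclidean inner product, so $\lambda$ matches its definition in the corollary. Finally, $\AltC_n$ restricted to $V_n\times V_n$ equals $\times_n$, which yields \eqref{eq:real-root}.

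As an alternative check, one could bypass conjugation entirely. From the proof of Theorem~\ref{thm:two-call} we have $Z=D+\iota H$, and for real inputs both $D$ and $H$ lie in the real space $V_{n-1}$. Hence $\ReC Z=D$ and $\ImC Z=H$, and Proposition~\ref{prop:doubled-alt} gives the result immediately. This is cleaner, and I would likely present it as the main argument.

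The only step needing care is the claim that $D$ and $H$ are real, or equivalently that conjugation commutes with $\AltC_{n-1}$. Both follow from the real structure constants of $\times_{n-1}$, which is a one-line justification.
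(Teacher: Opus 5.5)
Your proposal is correct. The argument you say you would present as the main one is exactly the paper's proof: for real inputs you write $Z=D+\iota H$ with $D,H$ real, read off $\ReC Z=D$ and $\ImC Z=H$, and substitute into Proposition~\ref{prop:doubled-alt}. Your conjugation route via $Z_-=\kappa(Z_+)$ is also valid, but it rests on the same reality observation and is not needed.
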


\begin{proof}
For $X,Y\in V_n$, the vectors $u,v,s,t$ have real coefficients. Hence the
quantities
\[
D=\AltC_{n-1}(u,v)-\AltC_{n-1}(s,t)
\]
and
\[
H=\AltC_{n-1}(u,t)+\AltC_{n-1}(s,v)
\]
also have real coefficients. By the calculation in the proof of
Theorem~\ref{thm:two-call},
\[
Z=D+\iota H\,.
\]
Therefore
\[
\ReC Z=D\,,
\qquad
\ImC Z=H\,.
\]
Substituting these identities into \eqref{eq:doubled-alt} gives
\eqref{eq:real-root}.
\end{proof}

Note that this one-call reduction applies at the real-input root and does not
hold at general complex recursive nodes.

\subsection{The recursive algorithm}

Algorithm~\ref{alg:multiplication} gives the complex kernel, its real-input
specialization, and full multiplication. In an $A_{n-1}^{\C}$ component, scalars
denote multiples of the unit. The splitting into scalar and imaginary parts
always refers to the Cayley--Dickson involution, not to coefficientwise complex
conjugation.

\begin{samepage}
\begin{algorithm}[Quasilinear Cayley--Dickson multiplication]
\label{alg:multiplication}
For $n\ge1$ and $X,Y\in V_n^{\C}$, compute $\AltC_n(X,Y)$ by
\begin{tabbing}
\qquad\=\qquad\=\kill
\>\textbf{if} $n=1$, \textbf{return} $0\in V_1^{\C}$;\\
\>write $X=(u,\gamma+s)$ and $Y=(v,\delta+t)$;\\
\>$Z_+\leftarrow\AltC_{n-1}(u+\iota s,v+\iota t)$;\\
\>$Z_-\leftarrow\AltC_{n-1}(u-\iota s,v-\iota t)$;\\
\>$\lambda\leftarrow\langle s,v\rangle_{\mathrm{bil}}
                  -\langle t,u\rangle_{\mathrm{bil}}$;\\
\>$F\leftarrow (Z_++Z_-)/2+\gamma t-\delta s$;\\
\>$G\leftarrow\lambda+\delta u-\gamma v+\iota(Z_+-Z_-)/2$;\\
\>\textbf{return} $(F,G)$.
\end{tabbing}
For $n\ge1$ and $X,Y\in V_n$, compute $\AltR_n(X,Y)$ by
\begin{tabbing}
\qquad\=\qquad\=\kill
\>\textbf{if} $n=1$, \textbf{return} $0\in V_1$;\\
\>write $X=(u,\gamma+s)$ and $Y=(v,\delta+t)$;\\
\>$Z\leftarrow\AltC_{n-1}(u+\iota s,v+\iota t)$;\\
\>$\lambda\leftarrow\langle s,v\rangle-\langle t,u\rangle$;\\
\>$F\leftarrow\ReC Z+\gamma t-\delta s$;\\
\>$G\leftarrow\lambda+\delta u-\gamma v-\ImC Z$;\\
\>\textbf{return} $(F,G)$.
\end{tabbing}
Finally, for $n\ge0$ and $x,y\in A_n$, compute $\Mul_n(x,y)$ by
\begin{tabbing}
\qquad\=\qquad\=\kill
\>\textbf{if} $n=0$, \textbf{return} the ordinary real product $xy$;\\
\>$\alpha\leftarrow\RePart(x)$,\quad $X\leftarrow\ImPart(x)$;\\
\>$\beta\leftarrow\RePart(y)$,\quad $Y\leftarrow\ImPart(y)$;\\
\>$Z\leftarrow\AltR_n(X,Y)$;\\
\>\textbf{return} $(\alpha\beta-\langle X,Y\rangle)+\alpha Y+\beta X+Z$.
\end{tabbing}
\end{algorithm}
\end{samepage}

\begin{proposition}[Correctness]\label{prop:correctness}
For every $n\ge1$, the complex kernel computes the complex-bilinear extension
of $\times_n$, and $\AltR_n(X,Y)=X\times_nY$ for real $X,Y\in V_n$.
Consequently, $\Mul_n(x,y)=xy$ for all $n\ge0$ and $x,y\in A_n$.
\end{proposition}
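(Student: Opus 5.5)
The proof goes by induction on $n$ for the complex kernel, after which the real specialization and full multiplication follow from results already established.

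\emph{Base case $n=1$.} The space $V_1^{\C}$ is one-dimensional over $\C$, spanned by the imaginary unit $e$. Any $X,Y\in V_1^{\C}$ are therefore $\C$-multiples of $e$. Since $\times_1$ is alternating (Lemma~\ref{lem:pure-product}) and $\AltC_1$ is its complex-bilinear extension, we get $\AltC_1(X,Y)=xy\,\AltC_1(e,e)=0$. This agrees with the returned value. The same argument gives $\AltR_1=\times_1=0$.

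\emph{Inductive step, $n\ge2$.} Assume the kernel correctly computes $\AltC_{n-1}$ on $V_{n-1}^{\C}$. Given $X,Y\in V_n^{\C}$, the splitting \eqref{eq:imaginary-splitting} extends $\C$-linearly: $X=(u,\gamma+s)$ and $Y=(v,\delta+t)$ with $u,v,s,t\in V_{n-1}^{\C}$ and $\gamma,\delta\in\C$, where the scalar/imaginary split uses the Cayley--Dickson involution. The proof of Theorem~\ref{thm:two-call} shows that $u\pm\iota s$ and $v\pm\iota t$ lie in $V_{n-1}^{\C}$. The inductive hypothesis therefore applies, so the two recursive calls return exactly $Z_\pm$ as defined in \eqref{eq:zpm}. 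The algorithm's $\lambda$, $F$ and $G$ are then literally the components of \eqref{eq:two-call-formula}, and Theorem~\ref{thm:two-call} gives $(F,G)=\AltC_n(X,Y)$.

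\emph{Real specialization.} For real $X,Y\in V_n$, the value $\AltR_n$ calls the now-verified $\AltC_{n-1}$ once, and its output matches \eqref{eq:real-root}. Corollary~\ref{cor:real-root} then gives $\AltR_n(X,Y)=X\times_nY$. Here the complex pairing restricted to real vectors is the Euclidean one.

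\emph{Full multiplication.} For $n=0$ the algorithm returns the ordinary real product. For $n\ge1$, $\Mul_n(x,y)$ is the right-hand side of \eqref{eq:scalar--imaginary} with $X\times_nY$ supplied by $\AltR_n$, so Proposition~\ref{prop:scalar--imaginary} gives $\Mul_n(x,y)=xy$.

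The only step needing care is the legitimacy of the complexification used in the induction. This has two parts. First, \eqref{eq:doubled-alt} must hold for complex $u,s,t,v,\gamma,\delta$. It does, because it is an identity of real-bilinear maps, and the paper notes it survives scalar extension; the pairing must be extended bilinearly without conjugation, so that $\lambda$ matches. Second, the recursive inputs $u\pm\iota s$ must be imaginary with respect to the $\C$-linear involution, not with respect to coefficientwise conjugation. Both points were already handled in Theorem~\ref{thm:two-call}, so the induction just cites them.
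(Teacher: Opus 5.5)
Your proof is correct and follows the paper's argument. You run induction on $n$ for the complex kernel, starting from $\AltC_1\equiv0$ and advancing by Theorem~\ref{thm:two-call}; you then use Corollary~\ref{cor:real-root} for the real-input root (the case $n=1$ being trivial), and finally \eqref{eq:scalar--imaginary} together with the scalar case $n=0$ for full multiplication, with your extra remarks on the $\C$-linear involution and the unconjugated pairing being details already settled in the proof of Theorem~\ref{thm:two-call}.
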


\begin{proof}
For the complex kernel, induction starts with $\AltC_1\equiv0$ and proceeds by
Theorem~\ref{thm:two-call}. Corollary~\ref{cor:real-root} then proves the
real-input assertion for $n\ge2$. For $n=1$, the real alternating product is
also zero. Formula~\eqref{eq:scalar--imaginary} recovers full multiplication,
and the case $n=0$ is the scalar base case.
\end{proof}

\section{Arithmetic complexity and tensor rank}
\label{sec:complexity}

\subsection{Arithmetic complexity}

It remains to show that the algorithm of the previous section is indeed
quasilinear. In this section, real additions, subtractions, and multiplications
are counted in the unit-cost arithmetic model, and all logarithms in complexity
estimates are to base two.

\begin{theorem}[Quasilinear multiplication]\label{thm:arithmetic}
For $n\ge1$, Algorithm~\ref{alg:multiplication} multiplies two elements of
$A_n$ using $O(n2^n)$ real arithmetic operations. Equivalently, for $N=2^n$,
\[
O(N\log N)
\]
operations suffice. For $n=0$, one real multiplication suffices.
\end{theorem}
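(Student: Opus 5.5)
The proof is a divide-and-conquer recurrence count for the three routines of Algorithm~\ref{alg:multiplication}: the complex kernel $\AltC_n$, its real specialization $\AltR_n$, and $\Mul_n$. We bound the cost of each in real operations, working from the kernel outward.

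\textbf{Complex kernel.} Let $T(n)$ be the worst-case number of real operations for one evaluation of $\AltC_n$ on inputs in $V_n^{\C}$. Each coordinate is a complex number, and $\dim_{\C}V_{n-1}^{\C}=2^{n-1}-1$. One non-base call of $\AltC_n$ performs the following work in addition to its two recursive calls.
\begin{itemize}
\item Forming $u\pm\iota s$ and $v\pm\iota t$ takes $O(2^{n-1})$ complex additions. Multiplication by $\iota$ is a coordinate swap and a sign change.
\item The two bilinear pairings defining $\lambda$ take $O(2^{n-1})$ complex multiplications and additions.
\item The scalar--vector products $\gamma t$, $\delta s$, $\delta u$, $\gamma v$ take $O(2^{n-1})$ complex multiplications.
\item The recombinations $(Z_++Z_-)/2$ and $\iota(Z_+-Z_-)/2$, plus the final sums forming $F$ and $G$, take $O(2^{n-1})$ complex additions and halvings.
\end{itemize}
By \eqref{eq:three-real-complex}, each complex multiplication costs three real multiplications and a constant number of real additions, and each complex addition costs two real additions. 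So there is an absolute constant $c$ with
\[
T(1)=0,\qquad T(n)\le 2T(n-1)+c\,2^{n}\quad(n\ge2).
\]
Dividing by $2^n$ gives $T(n)/2^n\le T(n-1)/2^{n-1}+c$. Summing this telescoping bound yields $T(n)\le c(n-1)2^n=O(n2^n)$.

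\textbf{Real specialization and full product.} The real specialization $\AltR_n$ makes one call to $\AltC_{n-1}$. Its remaining work is real linear-time work of the same kind: forming $u+\iota s$ and $v+\iota t$ is just packing real vectors into complex slots, extracting $\ReC Z$ and $\ImC Z$ costs nothing, and the pairings, scalar products, and sums are $O(2^n)$. Hence its cost is
\[
T(n-1)+O(2^n)=O(n2^n)
\]
for $n\ge2$, and $0$ for $n=1$.

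For $\Mul_n$, the splitting $x=\alpha+X$ is free in coordinates: $\alpha$ is the first coordinate and $X$ the rest, by the recursive form of conjugation. The outer formula \eqref{eq:scalar--imaginary} needs one scalar product $\alpha\beta$, one inner product $\langle X,Y\rangle$ of length $2^n-1$, two scalar--vector products, and vector additions. All of this is $O(2^n)$. The total is therefore $O(n2^n)=O(N\log N)$ for $n\ge1$, with correctness supplied by Proposition~\ref{prop:correctness}. When $n=0$, $\Mul_0$ is a single real multiplication.

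\textbf{Main difficulty.} The only real point is to check that the nonrecursive work at each node of the kernel is genuinely linear in $2^n$ with a uniform constant. In particular, nothing in the recombination should be superlinear or depend on the level. We confirm this by counting each line of the algorithm as above. After that, the recurrence is the standard two-branch, linear-overhead recurrence solved by the telescoping argument.
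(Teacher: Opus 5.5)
Your proposal is correct and follows essentially the same route as the paper: a two-branch recurrence $T(n)\le 2T(n-1)+c\,2^n$ for the complex kernel, one call to $\AltC_{n-1}$ at the real-input root, and $O(2^n)$ work for the scalar--imaginary reconstruction. The only cosmetic difference is that you count real operations from the start, absorbing the three-real-product conversion into $c$, whereas the paper first bounds complex operations and then converts them to real ones.
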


\begin{proof}
Let $T_n$ denote the complex arithmetic cost of computing $\AltC_n$ on
$V_n^{\C}$. Since $V_1^{\C}$ is one-dimensional and $\AltC_1\equiv0$, it holds
that $T_1=0$. For $n\ge2$, Theorem~\ref{thm:two-call} gives an absolute
constant $c$ such that
\[
T_n\le2T_{n-1}+c2^n\,.
\]
Iterating the recurrence yields
\[
T_n=O(n2^n)\,,
\]
since
\[
\sum_{j=2}^n 2^{n-j}2^j=(n-1)2^n\,.
\]

The same coefficient field $\C=\R(\iota)$ is used at every recursive level.
Consequently, each complex arithmetic operation has constant real cost; in
particular, a product of two complex numbers may be evaluated using the three
real products in \eqref{eq:three-real-complex}. Thus converting the complex
kernel to real arithmetic changes the cost only by a constant factor.

Now suppose first that $n\ge2$. By Corollary~\ref{cor:real-root}, computing
$X\times_nY$ for real $X,Y\in V_n$ requires one evaluation of $\AltC_{n-1}$
together with $O(2^n)$ further real arithmetic operations. The recursive call
has cost
\[
T_{n-1}=O\bigl((n-1)2^{n-1}\bigr)=O(n2^n)\,,
\]
so the real alternating product also has cost $O(n2^n)$. Formula
\eqref{eq:scalar--imaginary} then recovers the full product using only $O(2^n)$
additional real operations.

When $n=1$, the alternating product $\times_1$ vanishes identically, so
\eqref{eq:scalar--imaginary} computes the full product using only a constant
number of real operations, which is in particular $O(n2^n)$.

Hence multiplication in $A_n$ requires $O(n2^n)$ real arithmetic operations for
every $n\ge1$. Since $N=2^n$ and $n=\log_2N$, this is
\[
O(N\log N)\,.
\]
For $n=0$, multiplication is ordinary real multiplication and requires one real
product. The divisions by two appearing in the reconstruction are
multiplications by a fixed scalar and require no division by an input-dependent
quantity.
\end{proof}

\subsection{Bilinear complexity}

For a bilinear map $\mu:U_1\times U_2\to W$ between finite-dimensional real
vector spaces, its real tensor rank is the least integer $r$ for which
\[
\mu(x,y)=\sum_{j=1}^r\alpha_j(x)\beta_j(y)w_j
\]
with $\alpha_j\in U_1^*$, $\beta_j\in U_2^*$, and $w_j\in W$. Write $T_{A_n}$
for the multiplication tensor of $A_n$. This is the standard bilinear-complexity
model; see, for example, \cite[Chapter~14]{BurgisserEtAl1997}.

\begin{theorem}[Bilinear-complexity bound]\label{thm:rank}
For every $n\ge1$, the real-input algorithm for $\times_n$ uses at most
\begin{equation}\label{eq:Cn}
C_n=(9n-21)2^{n-1}+12
\end{equation}
real bilinear multiplications. Consequently,
\begin{equation}\label{eq:rank-bound}
\Rk_{\R}(T_{A_n})
\le(9n-15)2^{n-1}+10
=O(N\log N)\,,\qquad N=2^n\,.
\end{equation}
\end{theorem}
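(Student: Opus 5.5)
The plan is to count input-dependent products level by level, first in the complex kernel and then at the real root, and to solve the resulting linear recurrence exactly. The counting convention: a \emph{bilinear multiplication} is a product $\ell(X)\,m(Y)$, where $\ell$ is a real linear form in the coordinates of $X$ and $m$ a real linear form in those of $Y$. Multiplication by fixed constants such as $1/2$ or $\iota$, and all additions, are free. Every complex quantity appearing in Algorithm~\ref{alg:multiplication} is $\C$-linear in $X$ alone, or in $Y$ alone, or is one of the recursive outputs. Hence each complex product of an $X$-dependent number by a $Y$-dependent number, evaluated via \eqref{eq:three-real-complex}, contributes exactly three bilinear multiplications $p,q,r$. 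Their factors $a,b,a+b$ and $c,d,c+d$ are real and imaginary parts, which are real-linear forms in the inputs.

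\textbf{Step 1 (complex kernel).} Let $K_n$ be the number of complex products used by $\AltC_n$, so $K_1=0$. For $n\ge2$, set $m=\dim V_{n-1}=2^{n-1}-1$. Besides the two recursive calls, the node computes:
\begin{itemize}
\item the two pairings $\langle s,v\rangle_{\mathrm{bil}}$ and $\langle t,u\rangle_{\mathrm{bil}}$, costing $2m$ products;
\item the four scalar--vector products $\gamma t$, $\delta s$, $\delta u$, $\gamma v$, costing $4m$ products (coordinatewise, with each vector lying in $V_{n-1}^{\C}$).
\end{itemize}
The inputs $u\pm\iota s$ and $v\pm\iota t$ and the recombinations $(Z_+\pm Z_-)/2$ and $\iota(Z_+-Z_-)/2$ are free. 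Thus
\[
K_n=2K_{n-1}+6\,(2^{n-1}-1)\,.
\]
I would verify by induction the closed form $K_n=(3n-6)2^n+6$. It gives $K_1=0$, and $K_2=6$ agrees with a direct count, since at $n=2$ we have $m=1$. Each complex product costs three real bilinear multiplications, so $\AltC_{n-1}$ costs $3K_{n-1}$ real ones.

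\textbf{Step 2 (real root and full product).} For $n\ge2$, Corollary~\ref{cor:real-root} uses one call to $\AltC_{n-1}$. It then needs $2m$ real products for $\lambda$ and $4m$ real products for $\gamma t,\delta s,\delta u,\gamma v$; taking $\ReC$ and $\ImC$ is free. Hence
\[
C_n=3K_{n-1}+6(2^{n-1}-1)=3\bigl[(3n-9)2^{n-1}+6\bigr]+6\cdot2^{n-1}-6=(9n-21)2^{n-1}+12\,,
\]
which is \eqref{eq:Cn}. For $n=1$, $\times_1\equiv0$ gives $C_1=0$, and the formula also gives $(-12)+12=0$.

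For $\Mul_n$, formula~\eqref{eq:scalar--imaginary} adds the following, each entry of $\alpha Y$ and $\beta X$ being a single product:
\begin{itemize}
\item one product $\alpha\beta$;
\item $2^n-1$ products for $\langle X,Y\rangle$;
\item $2(2^n-1)$ products for $\alpha Y$ and $\beta X$.
\end{itemize}
This is $3\cdot2^n-2=6\cdot2^{n-1}-2$ in all, and adding it to $C_n$ gives $(9n-15)2^{n-1}+10$. Since $\Mul_n(x,y)=xy$ by Proposition~\ref{prop:correctness}, writing $xy$ as the sum of these products times fixed output vectors is a bilinear decomposition of $T_{A_n}$. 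This proves \eqref{eq:rank-bound}, and the bound is $O(n2^n)=O(N\log N)$.

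\textbf{Main obstacle.} The only real subtlety is justifying the factor three. One must check that after the complex inputs are formed (e.g.\ $u+\iota s$), and at deeper levels after further such combinations, every factor entering a complex product is still $\C$-linear in one input only. Then the three-product rule \eqref{eq:three-real-complex} genuinely yields real bilinear multiplications, and no product ever has both factors depending on the same input. I would establish this by an induction on recursion depth, checking that each node's arguments are $\C$-linear images of $X$ and of $Y$ respectively. A secondary point is to count only input-dependent products: the divisions by two and the multiplications by $\iota$ or $\pm1$ are free.
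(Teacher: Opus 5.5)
Your proposal is correct and follows the paper's proof essentially step for step: the same recurrence $K_n=2K_{n-1}+6(2^{n-1}-1)$ with closed form $3(n-2)2^n+6$, the same root count $C_n=3K_{n-1}+6(2^{n-1}-1)$, the same $3\cdot 2^n-2$ reconstruction products, and the same justification that every counted product pairs a real linear form in $X$ with a real linear form in $Y$. Your planned induction on recursion depth simply spells out what the paper states in its closing sentence.
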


\begin{proof}
Let $Q_n$ be the number of complex input-dependent scalar multiplications in the
schedule for $\AltC_n$. For $n\ge2$, put $m=\dim V_{n-1}=2^{n-1}-1$. The four
scalar--vector products in \eqref{eq:two-call-formula},
\[
\gamma t\,,
\quad
\delta s\,,
\quad
\delta u\,,
\quad
\gamma v\,,
\]
use $4m$ complex bilinear multiplications, while the two bilinear pairings use
another $2m$, so with $Q_1=0$,
\[
Q_n=2Q_{n-1}+6(2^{n-1}-1)\,.
\]
A direct induction gives
\begin{equation}\label{eq:Qn}
Q_n=3(n-2)2^n+6\,,\qquad n\ge1\,.
\end{equation}
At a real-input root with $n\ge2$, the single complex recursive call requires
$3Q_{n-1}$ real products by \eqref{eq:three-real-complex}, while the four
scalar--vector products and two inner products at the root are real.
Consequently,
\[
C_n=3Q_{n-1}+6(2^{n-1}-1)
=(9n-21)2^{n-1}+12\,.
\]
For $n=1$, the alternating product is zero and \eqref{eq:Cn} gives $C_1=0$.

Put $N=2^n$. Once $X\times_nY$ is known, \eqref{eq:scalar--imaginary} requires
one multiplication for $\alpha\beta$, $N-1$ for $\langle X,Y\rangle$, and $N-1$
for each of the scalar--vector products $\alpha Y$ and $\beta X$, giving $3N-2$
additional bilinear multiplications. It follows that
\[
\Rk_{\R}(T_{A_n})
\le C_n+3\cdot2^n-2
=(9n-15)2^{n-1}+10\,.
\]
Indeed, after expressing complex products by \eqref{eq:three-real-complex}, each
counted product multiplies a real linear form in the first input by a real
linear form in the second input, and all remaining operations are linear
recombinations. The schedule therefore gives a real bilinear decomposition of
the stated length.
\end{proof}

The count concerns the displayed schedule and excludes multiplications by fixed
constants, including the factors $1/2$ in the reconstruction, which are included
in the total arithmetic bound of Theorem~\ref{thm:arithmetic}. Specialized
low-dimensional base cases may improve the constants without changing the
asymptotic bound.

\subsection{Auxiliary storage}

\begin{proposition}[Linear auxiliary storage]\label{prop:workspace}
Algorithm~\ref{alg:multiplication} admits a matrix-free sequential evaluation
using $O(N)$ auxiliary real coefficients in addition to the input and output
storage.
\end{proposition}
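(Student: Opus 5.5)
We will bound the workspace by a recurrence on the complex kernel $\AltC_n$. Let $W_n$ be the auxiliary storage, in complex coefficients, used by a sequential evaluation of $\AltC_n$ on $V_n^{\C}$, not counting the two inputs and the output buffer. The goal is to show $W_n\le W_{n-1}+c\,2^{n}$ for an absolute constant $c$. Iterating from $W_1=0$ then gives $W_n\le c\sum_{j\le n}2^j=O(2^n)$.

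The key point is that the two recursive calls in Algorithm~\ref{alg:multiplication} run one after the other, so their workspaces can share the same memory. At level $n$ with $m=2^{n-1}-1$, the schedule is as follows.
\begin{enumerate}
\item Form $u+\iota s$ and $v+\iota t$ in two scratch vectors of length $m$.
\item Call $\AltC_{n-1}$, writing $Z_+$ into the first output half $F$. This call uses the shared region of size $W_{n-1}$.
\item Overwrite the scratch vectors with $u-\iota s$ and $v-\iota t$.
\item Call $\AltC_{n-1}$ again, writing $Z_-$ into one scratch buffer of length $m$ and reusing the same region of size $W_{n-1}$.
\item Form $F\leftarrow(Z_++Z_-)/2$ and the imaginary part of $G$, namely $\iota(Z_+-Z_-)/2$, in place, coordinate by coordinate. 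Each coordinate needs only a constant number of temporaries.
\item Accumulate the scalar--vector terms $\gamma t-\delta s$ and $\delta u-\gamma v$ in a streaming pass.
\item Compute $\lambda$ as a running sum in $O(1)$ storage.
\end{enumerate}
This gives the recurrence with $c$ of order $3$ complex vectors, i.e.\ a constant number of real coefficients per coordinate. Complex products done by \eqref{eq:three-real-complex} need only $O(1)$ real temporaries.

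The only subtle step is that all level-$n$ buffers must be live only during that frame and released before returning. The stack then holds one frame per level, of geometrically decreasing sizes $3\cdot 2^{n-1},3\cdot2^{n-2},\dots$. One could also preallocate a single workspace of size $\sum_j O(2^j)$ and pass offsets into it, which makes the sequential implementation matrix-free: no structure tensor is ever stored. The recursion depth is $n$, so any per-frame bookkeeping adds $O(n)=O(\log N)$, which is absorbed.

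For the real-input root $\AltR_n$, the single complex call needs two complex input vectors of length $2^{n-1}-1$, the buffer for $Z$, and $W_{n-1}$ workspace. $F$ and $G$ can be written directly into the output. For $\Mul_n$, we split $x,y$ into real and imaginary parts by reading the inputs with offsets, which costs no copies. We write $\AltR_n(X,Y)$ into the output, then add $\alpha Y+\beta X$ and the scalar term in place. The total is $O(2^n)=O(N)$ real coefficients.
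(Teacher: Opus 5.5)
Your proof is correct and follows the paper's argument. The two half-size calls are evaluated sequentially over shared workspace, which gives $W_n\le W_{n-1}+c\,2^n$ and hence $O(N)$ storage; the real-input root, the reconstruction, and the $O(\log N)$ recursion bookkeeping add at most linear terms. Your explicit buffer schedule (writing $Z_+$ into $F$ and $Z_-$ into a scratch buffer, then recombining coordinatewise) is a more detailed instance of the paper's ``constant number of half-size arrays'' step.
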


\begin{proof}
At a complex recursive node of dimension $N$, the two half-size calls can be
evaluated sequentially, so a constant number of half-size arrays suffices for
the transformed inputs, the retained first output, the second output, and
recombination. If $W(N)$ denotes the maximum number of temporary complex
coefficients, then
\[
W(N)\le W(N/2)+cN
\]
for an absolute constant $c$, and summing along a depth-first recursion path
gives $W(N)=O(N)$. Each complex coefficient is stored as two real coefficients,
and the real-input root and scalar--imaginary reconstruction require only $O(N)$
additional real storage. The recursion metadata occupies $O(\log N)$ machine
words, and no multiplication table or dense matrix is constructed.
\end{proof}

The culmination of these results is the proof of Theorem~\ref{thm:intro-main}.

\begin{proof}[Proof of Theorem~\ref{thm:intro-main}]
The bilinear bound is \eqref{eq:rank-bound} in Theorem~\ref{thm:rank}, the
arithmetic bound is Theorem~\ref{thm:arithmetic}, and the storage bound is
Proposition~\ref{prop:workspace}.
\end{proof}

\subsection{Comparison of arithmetic counts}
\label{subsec:operation-counts}

Put $N=2^n\ge2$. Write $M$ for input-dependent real multiplications, $A$ for
real additions and subtractions, and $S$ for multiplications by fixed scalars
other than $0,\pm1$. Each costs one operation, so the total is $T=M+A+S$.
Known-zero terms are omitted and sums are initialized from their first term.
Preprocessing of both variable operands is included in the total cost.
Coordinate extraction, copying, permutations, and sign changes are
not charged, and scalar and imaginary parts are obtained directly from
the standard-basis coordinates.

Direct multiplication is easily seen to have $(M,A,S)=(N^2,N(N-1),0)$, and thus
\[
T_{\mathrm{direct}}=2N^2-N\,.
\]
For the uniform Cariow--Cariowa (CC) method, \cite{CariowCariowa2015} gives
\[
M_{\mathrm{CC}}=\frac{N(N-1)}{2}+2\,,
\qquad
A_{\mathrm{CC}}=3nN+\frac{N(N-3)+4}{2}\,.
\]
The count in \cite{CariowCariowa2015} does not charge the power-of-two scalings.
Under the convention used here, the factorization also contributes $N$
normalizations by $1/N$ and $N$ correction scalings by $2$, giving
$S_{\mathrm{CC}}=2N$ for $N\ge4$ and $S_{\mathrm{CC}}=3$ at $N=2$, where one
correction coordinate is zero.

\begin{proposition}[Exact arithmetic counts]\label{prop:exact-counts}
Under the conventions above, with complex products evaluated by
\eqref{eq:three-real-complex} and the sum of the real and imaginary parts of
each input coordinate computed once per node, Algorithm~\ref{alg:multiplication}
multiplies two elements of $A_n$, $n\ge3$, $N=2^n$, using
\begin{equation}\label{eq:ql-arithmetic-counts}
\begin{aligned}
M_{\mathrm{QL}}&=\frac{(9n-15)N}{2}+10\,,
& A_{\mathrm{QL}}&=\frac{(23n-61)N}{2}+36\,,\\
S_{\mathrm{QL}}&=\frac{(2n-7)N}{2}+4\,,
& T_{\mathrm{QL}}&=\frac{(34n-83)N}{2}+50\,.
\end{aligned}
\end{equation}
For $n=1$ and $n=2$, $(M,A,S)=(4,2,0)$ and $(16,12,0)$ respectively.
\end{proposition}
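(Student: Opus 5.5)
The plan is to set up exact recurrences for the three counts $(M,A,S)$ of the complex kernel $\AltC_n$, solve them in closed form, and then add the root and reconstruction costs. Write $m=2^{n-1}-1=\dim V_{n-1}$. I will charge a complex addition as $2A$ and a multiplication by $\iota$ as free. A complex-by-complex product via \eqref{eq:three-real-complex} costs $3M+5A$. If the sums $a+b$ of each input coordinate are precomputed once per node and reused, the marginal cost drops to $3M+3A$ per product, plus a one-time charge of one real addition per complex input coordinate used in products.

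At a node of $\AltC_n$ with $n\ge2$ I will tally the work in the following order.
\begin{itemize}
\item Forming $u\pm\iota s$ and $v\pm\iota t$ costs $4m$ complex additions. Multiplication by $\iota$ is only a swap and a sign, so this is $8m$ real additions.
\item The four scalar--vector products and the two bilinear pairings give $6m$ complex products.
\item Accumulating the pairings into $\lambda$ costs $2m-1$ complex additions.
\item Forming $F$ and $G$ costs a fixed number of complex additions per coordinate.
\item The halvings $(Z_+\pm Z_-)/2$ contribute to $S$. They can be merged by scaling $Z_\pm$ once, or by halving the output, so roughly $2\cdot 2m$ real scalings.
\end{itemize}
This yields recurrences $Q^{M}_n=2Q^M_{n-1}+18m$, and similarly $Q^A_n=2Q^A_{n-1}+a\,2^{n}+b$ and $Q^S_n=2Q^S_{n-1}+s\,2^n+s'$, with $Q_1=0$ and explicit constants. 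The $M$-recurrence is just $3Q_n$ from Theorem~\ref{thm:rank}, which serves as a consistency check.

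The real root $\AltR_n$ makes one call to $\AltC_{n-1}$ and does real work: $2m$ products for the two inner products and $4m$ products for the scalar--vector products, with matching additions. The formation of $u+\iota s$ is free, since it just packs real data. The reconstruction via \eqref{eq:scalar--imaginary} adds $3N-2$ multiplications and about $3N$ additions, so I must count precisely which sums are initialized from a first term. Summing these pieces and solving the linear recurrences, of the form $\sum 2^{n-j}(\alpha 2^j+\beta)$, produces the $\frac{(\cdot\, n-\cdot)N}{2}+\text{const}$ shapes. For $M$ this should reproduce $(9n-15)N/2+10$ from Theorem~\ref{thm:rank}.

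The cases $n=1,2$ are handled separately. For $n=1$, $\times_1=0$, and the full product costs $4$ multiplications and $2$ additions. For $n=2$, the call $\AltC_1$ is zero, so no complex arithmetic or halving survives and the count is $16$ and $12$. These cases explain why the closed form only starts at $n=3$. The low levels of the recursion ($\AltC_2$, where the children vanish) also need care, since known-zero terms are omitted there. I would therefore take $\AltC_2$ as the base of the recurrences, with its exact cost computed by hand, rather than $\AltC_1$.

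The main obstacle is the additive and scalar bookkeeping: the exact treatment of the precomputed $a+b$ sums, the savings from known zeros at $\AltC_2$, and where the factors $1/2$ are applied. These choices are what fix the constants $-61$, $36$, $-7$, and $4$. I would first compute $\AltC_2$ and $\AltC_3$ by hand and fit the recurrence constants against them. Then I would check the totals at $n=3$ and $n=4$ against \eqref{eq:ql-arithmetic-counts}, for instance $T_{\mathrm{QL}}=126$ at $n=3$, before asserting the general formula by induction.
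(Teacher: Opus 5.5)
Your architecture matches the paper's proof. You tally one node of $\AltC_k$ to get linear recurrences for real additions $P_k$ and halvings $H_k$. These are based at $\AltC_2$, where $Z_\pm=0$ removes the transformed inputs, the $Z_\pm$ recombination, and all halvings. You then add one call from the real root (with free interleaving), the root's real work, and the reconstruction. The ingredients you list are also right:
\begin{itemize}
\item $8m$ additions for $u\pm\iota s$ and $v\pm\iota t$;
\item one shared pre-sum per input coordinate ($4m+2$ in all), plus three additions for each of the $6m$ complex products;
\item $2m-1$ complex additions for the pairings and $\lambda$.
\end{itemize}

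\textbf{The gap.} The actual count is missing, and for a statement of exact counts that is the whole content. You leave four quantities unfixed:
\begin{itemize}
\item the recombination as ``a fixed number'' of additions per coordinate;
\item the halvings as ``roughly'' $4m$;
\item the root as ``matching additions'';
\item the reconstruction as ``about $3N$''.
\end{itemize}
You then propose to fit the recurrence constants to hand computations of $\AltC_2$ and $\AltC_3$. That fit is underdetermined: with $P_2$ known, a recurrence $P_k=2P_{k-1}+a2^k+b$ yields only the single equation $8a+b=P_3-2P_2$. Also, agreement with \eqref{eq:ql-arithmetic-counts} at $n=3,4$ is a check, not a proof.

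\textbf{How to close it.} Finish your own tally instead. For $k\ge3$ and $m=2^{k-1}-1$:
\begin{itemize}
\item Recombination costs $4m$ additions for $Z_+\pm Z_-$ and $8m$ for adjoining $\gamma t,-\delta s,\delta u,-\gamma v$. Here $\lambda$ occupies the otherwise-zero unit coordinate and multiplication by $\iota$ is free.
\item There are exactly $4m$ halvings, applied to $Z_\pm$ or to $Z_+\pm Z_-$. They cannot be applied to the whole output, since its other terms are unscaled.
\end{itemize}
Hence
\[
P_k=2P_{k-1}+46m,\qquad H_k=2H_{k-1}+4m,\qquad P_2=6+18+6=30,\qquad H_2=0,
\]
so $P_k=(23k-50)2^k+46$ and $H_k=(2k-5)2^k+4$.

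At the root, with $m=2^{n-1}-1$, the additions number exactly $2(m-1)+1+4m=3N-7$. The reconstruction costs $(N-2)+1+(N-1)+(N-1)=3N-3$. Then $A_{\mathrm{QL}}=P_{n-1}+6N-10$ and $S_{\mathrm{QL}}=H_{n-1}$ give the stated formulas, and $M_{\mathrm{QL}}$ is Theorem~\ref{thm:rank}.
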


\begin{proof}
The multiplication count is Theorem~\ref{thm:rank}. For the remaining counts,
let $P_k$ and $H_k$ denote the total numbers of real additions and real
halvings, respectively, in one evaluation of $\AltC_k$, including its recursive
calls.

\emph{Complex nodes.} Fix $k\ge3$ and put $m=\dim_{\C}V_{k-1}^{\C}=2^{k-1}-1$,
so that $u,v,s,t$ have $m$ complex coordinates each. The work at the node, apart
from the two recursive calls, is as follows.
\begin{enumerate}
\item  \emph{Transformed inputs.} Each coordinate of $u\pm\iota s$ is
  $(u_{\mathrm r}\mp s_{\mathrm i})+\iota(u_{\mathrm i}\pm s_{\mathrm r})$,
  costing two real additions per coordinate per sign; the same holds for
  $v\pm\iota t$. Total: $8m$ additions. \item \emph{Complex products.} The
  scalar--vector products $\gamma t,\delta s,\delta u,\gamma v$ and the pairings
  $\langle s,v\rangle_{\mathrm{bil}}$, $\langle t,u\rangle_{\mathrm{bil}}$
  require $6m$ complex products. In \eqref{eq:three-real-complex} the sums $a+b$
  and $c+d$ depend only on the factors, so they are formed once for $\gamma$ and
  $\delta$ and once for each coordinate of $u,v,s,t$, giving $4m+2$ additions,
  and are reused between the scalar--vector products and the pairings. Each
  product then costs three further additions, for $p-q$ and $r-p-q$. Total:
  $22m+2$ additions. \item \emph{Pairing accumulation.} Summing $m$ complex
  terms costs $m-1$ complex additions, so the two pairings cost $4(m-1)$ real
  additions, and the subtraction forming $\lambda$ costs two more. Total: $4m-2$
  additions. \item \emph{Recombination.} Forming $Z_++Z_-$ and $Z_+-Z_-$ costs
  $4m$ real additions and halving both costs $4m$ real halvings. Adding $\gamma
  t$, $-\delta s$ to the first coordinate and $\delta u$, $-\gamma v$ to the
  second costs $8m$ real additions. Multiplication by $\iota$ is a coordinate
  exchange and sign change, and $\lambda$ occupies the unit coordinate of the
  second component, which is otherwise zero, so neither costs an operation.
  Total: $12m$ additions and $4m$ halvings.
\end{enumerate}
Hence, for $k\ge3$,
\[
P_k=2P_{k-1}+46(2^{k-1}-1)\,,\qquad H_k=2H_{k-1}+4(2^{k-1}-1)\,.
\]
For $k=2$ one has $m=1$ and $Z_\pm=\AltC_1(\cdot,\cdot)=0$, so forming the
transformed inputs and all operations involving $Z_\pm$ are omitted. The six
complex products cost $6+18=24$ additions, and the three complex subtractions
$\gamma t-\delta s$, $\lambda$, and $\delta u-\gamma v$ cost $6$ more. Thus
$P_2=30$ and $H_2=0$. Induction on $k$ now gives, for $k\ge2$,
\[
P_k=(23k-50)2^k+46\,,\qquad H_k=(2k-5)2^k+4\,.
\]

\emph{Real root.} For $n\ge3$, $\AltR_n$ forms $u+\iota s$ and $v+\iota t$ by
interleaving real coordinates, at no arithmetic cost, and makes one call to
$\AltC_{n-1}$, costing $P_{n-1}$ additions and $H_{n-1}$ halvings. With
$m=2^{n-1}-1$, the two real pairings and their difference cost $2(m-1)+1$
additions, and forming $F$ and $G$ from $\ReC Z$, $\ImC Z$ and the four real
scalar--vector products costs $4m$ additions. Total: $6m-1=3N-7$ additions and
no halvings.

\emph{Reconstruction.} In \eqref{eq:scalar--imaginary}, $\langle X,Y\rangle$
costs $N-2$ additions, $\alpha\beta-\langle X,Y\rangle$ one more, $\alpha
Y+\beta X$ costs $N-1$, and adding $X\times_nY$ costs $N-1$. Total: $3N-3$
additions and no halvings.

Summing,
\[
\begin{aligned}
A_{\mathrm{QL}}&=P_{n-1}+(3N-7)+(3N-3)\\
&=(23n-73)2^{n-1}+46+6N-10\\
&=\frac{(23n-61)N}{2}+36\,,
\end{aligned}
\]
$S_{\mathrm{QL}}=H_{n-1}=(2n-7)2^{n-1}+4$, and $T_{\mathrm{QL}}$ follows by
adding $M_{\mathrm{QL}}$ from Theorem~\ref{thm:rank}.

For $n=1$, $\times_1\equiv0$ and \eqref{eq:scalar--imaginary} is complex
multiplication with four products and two additions. For $n=2$, $\AltR_2$ is the
cross product on $V_1\oplus V_1\oplus\R$ with six products and three additions,
and \eqref{eq:scalar--imaginary} adds ten products and nine additions.
\end{proof}

Table~\ref{tab:operation-counts} gives these counts. The quasilinear schedule
uses fewer input-dependent multiplications than direct multiplication from $N=8$
onward, and fewer total operations from $N=16$ onward; both counts are smaller
than CC from $N=32$ onward.

\begin{table}[htbp]
\centering
\caption{Real arithmetic counts for the specified schedules (quasilinear counts
from Proposition~\ref{prop:exact-counts}): input-dependent multiplications
$(M)$, additions and subtractions $(A)$, and total operations $(T=M+A+S)$,
including fixed-scalar multiplications.}
\label{tab:operation-counts}
\small
\setlength{\tabcolsep}{2.5pt}
\begin{tabular}{@{}r rrr@{\hspace{9pt}}rrr@{\hspace{9pt}}rrr@{}}
\toprule
& \multicolumn{3}{c}{Direct}
& \multicolumn{3}{c}{Cariow--Cariowa}
& \multicolumn{3}{c}{Quasilinear} \\
\cmidrule(lr){2-4}\cmidrule(lr){5-7}\cmidrule(lr){8-10}
$N$ & $M$ & $A$ & $T$ & $M$ & $A$ & $T$ & $M$ & $A$ & $T$ \\
\midrule
2    & 4       & 2       & 6       & 3      & 7      & 13      & 4
  &2     & 6      \\
4    & 16      & 12      & 28      & 8      & 28     & 44      & 16
  &12    & 28     \\
8    & 64      & 56      & 120     & 30     & 94     & 140     & 58
  &68    & 126    \\
16   & 256     & 240     & 496     & 122    & 298    & 452     & 178
  &284   & 474    \\
32   & 1024    & 992     & 2016    & 498    & 946    & 1508    & 490
  &900   & 1442   \\
64   & 4096    & 4032    & 8128    & 2018   & 3106   & 5252    & 1258
  &2500  & 3922   \\
128  & 16384   & 16256   & 32640   & 8130   & 10690  & 19076   & 3082
  &6436  & 9970   \\
256  & 65536   & 65280   & 130816  & 32642  & 38530  & 71684   & 7306
  &15780 & 24242  \\
512  & 262144  & 261632  & 523776  & 130818 & 144130 & 275972  & 16906
  &37412 & 57138  \\
1024 & 1048576 & 1047552 & 2096128 & 523778 & 553474 & 1079300 & 38410
  &86564 & 131634 \\
\bottomrule
\end{tabular}
\end{table}

\section{Implementation and benchmarks}
\label{sec:implementation}

The new quasilinear algorithm presented in this paper is implemented in
\texttt{fastCD}, a standalone C11 library for the standard real Cayley--Dickson
tower, and the accompanying \texttt{pyfastcd} extension makes these same
routines available in Python through NumPy arrays. Source code, tests, benchmark
programs, reproduction scripts, and archived data are distributed
under the MIT license at \url{https://git.tanuki-cd.com/algebraity/fastCD}
\cite{LemleyFastCD2026}. The measurements reported here were generated using
source revision
\href{https://git.tanuki-cd.com/algebraity/fastCD/commit/14bf30548487ccf35939de89cb7f7e98a1dab5c6}{\texttt{14bf3054}},
and the canonical
\href{https://git.tanuki-cd.com/algebraity/fastCD/src/commit/fde689de7fed574f11ad67363187cde2434175b6/benchmark-results/paper-results.json}{JSON data}
and
\href{https://git.tanuki-cd.com/algebraity/fastCD/src/commit/fde689de7fed574f11ad67363187cde2434175b6/benchmark-results/paper-results.md}{markdown report}
are preserved at revision
\href{https://git.tanuki-cd.com/algebraity/fastCD/commit/fde689de7fed574f11ad67363187cde2434175b6}{\texttt{fde689de}}
in \nolinkurl{benchmark-results/paper-results.json} and
\nolinkurl{benchmark-results/paper-results.md}, respectively.

\subsection{Implementation and benchmark protocol}

Elements are stored as contiguous arrays of binary64 coefficients in the
recursive standard basis, and the quasilinear algorithm is evaluated according
to Section~\ref{sec:recursion}. In addition to this new algorithm, the library
includes an optimized implementation of the defining recursion and a complete
matrix-free implementation of the uniform Cariow--Cariowa method
\cite{CariowCariowa2015}. All three methods include specialized kernels for
small dimensions and can be used for both single and batched products, with the
choice of algorithm either specified by the user or made automatically through
the \texttt{AUTO} setting. More information on the implementation and examples
of its use in C and Python are available in the repository documentation.

In order to compare the three algorithms and choose the default algorithm for
each dimension, a benchmark was performed on one core of an Intel Xeon Gold 6148
processor (nominal frequency $2.40$ GHz), running Linux with code compiled using
GCC~16.1.1 and the flags \texttt{-O3 -DNDEBUG -fno-fast-math -ffp-contract=off}.
At each dimension $N=2,4,\ldots,1024$, all three methods were tested on the same
$10{,}000$ randomly generated input pairs with operand norms in $[0.5,2)$. Both
single and batched multiplication were measured, with one native C call made for
each input pair in the single case and one for the complete set of pairs in the
batch case, where batched products were evaluated with SIMD for the quasilinear
and Cariow--Cariowa kernels at $16\le N\le512$ and for the direct kernel at
$64\le N\le512$, with the single-product kernels used for other values of $N$.

All storage was allocated before timing began, so that the measurements include
the cost of the C calls but exclude input generation, numerical verification,
and Python object overhead. The benchmark used $24$ subprocess blocks with a
rotated task order, and each timing observation consisted of two warmup passes
and then repeated passes over the input pairs, calibrated to last at least $20$
milliseconds. For every algorithm, the results on all $10{,}000$ pairs were
checked against an independent implementation of the defining recursion in both
modes, and all results passed the benchmark's numerical checks. Agreement
required both the computed product $\widehat p$ and the independently evaluated
binary64 reference $p_{\mathrm{ref}}$ to have finite coordinates and satisfy
$\|\widehat p-p_{\mathrm{ref}}\|_\infty\le128N\varepsilon\|x\|_2\|y\|_2$, where
$\varepsilon=2^{-52}$ is the binary64 machine epsilon. The full procedure, build
information, raw data, confidence intervals, and supporting metadata are
included in the archived reports.

\subsection{Performance and automatic algorithm selection}

Table~\ref{tab:multiplication-benchmark} gives the mean time per product in
nanoseconds, averaged over the $24$ subprocess blocks, with the direct column
referring to the optimized implementation of the defining recursion. The
quasilinear algorithm has the lowest mean time among the three methods at every
tested dimension $N\ge32$ for both single and batched products, and at $N=1024$
it is approximately $16$ times faster than direct multiplication. The archived
report also includes $95\%$ confidence intervals for these means and for the
paired timing ratios.

\begin{table}[tbp]
\centering
\caption{Native multiplication times in nanoseconds per product, rounded to two
decimal places. CC denotes Cariow--Cariowa and QL denotes quasilinear
multiplication.}
\label{tab:multiplication-benchmark}
\small
\setlength{\tabcolsep}{3pt}
\begin{tabular}{@{}r rrr rrr@{}}
\toprule
& \multicolumn{3}{c}{Batch products}
& \multicolumn{3}{c}{Single products} \\
\cmidrule(lr){2-4}\cmidrule(lr){5-7}
$N$ & Direct & CC & QL & Direct & CC & QL \\
\midrule
2    & 1.34      & 1.53      & 1.02     & 16.90     & 17.92     & 15.69    \\
4    & 3.87      & 4.58      & 3.80     & 19.33     & 22.15     & 19.89    \\
8    & 11.89     & 13.53     & 16.22    & 27.78     & 39.42     & 36.45    \\
16   & 54.68     & 49.38     & 56.95    & 78.06     & 103.90    & 119.59   \\
32   & 257.63    & 221.01    & 193.27   & 286.89    & 439.21    & 225.86   \\
64   & 1124.05   & 947.05    & 492.65   & 1121.00   & 1851.90   & 542.68   \\
128  & 4468.17   & 4908.83   & 1322.56  & 4390.47   & 7833.73   & 1334.23  \\
256  & 17757.92  & 17663.82  & 3149.78  & 17314.96  & 28830.79  & 3381.47  \\
512  & 69138.03  & 65786.36  & 7485.61  & 69029.59  & 107873.78 & 7913.86  \\
1024 & 273693.44 & 408225.87 & 17483.64 & 274511.31 & 411259.09 & 17525.01 \\
\bottomrule
\end{tabular}
\end{table}

Although the quasilinear algorithm has the best asymptotic complexity of the
three methods, this does not mean that it is the fastest at every lower
dimension, and the benchmark results show why the other methods remain useful.
At $N=8$, direct multiplication has the lowest mean time for both single and
batched products, while at $N=16$, Cariow--Cariowa is fastest for batches and
direct multiplication is fastest for single products. Based on the operation
counts at $N=1024$, $T_{\mathrm{direct}}/T_{\mathrm{QL}} \approx15.9$, close to
the measured timing ratio of approximately $15.7$. This agreement does not hold
for all three implementations. Indeed, by operation count alone, direct
multiplication requires about $1.9$ times the operations of Cariow--Cariowa at
$N=1024$, yet the Cariow--Cariowa implementation was measured to be about $1.5$
times slower. Thus the total operation counts of
Section~\ref{subsec:operation-counts} do not by themselves determine running
time on this platform. The uniform Cariow--Cariowa schedule uses fewer real
multiplications at several small dimensions than the uniform quasilinear
schedule of Section~\ref{subsec:operation-counts}, so a different implementation
of it with less overhead could change these results. The results reported here
therefore describe the performance of matrix-free implementations after attempts
to optimize each of them, and should not be interpreted as showing that
Cariow--Cariowa is generally slower.

Based on these measurements, the \texttt{AUTO} setting selects
different algorithms at different dimensions, with separate choices
for single and batched products. For single products, direct multiplication
is used at $N=4,8,16$ and quasilinear multiplication is used otherwise,
while for batched products direct multiplication is used at $N=8$,
Cariow--Cariowa at $N=16$, and quasilinear multiplication otherwise. These
choices are intended to give good performance without requiring users to choose
an algorithm for each dimension themselves, while still allowing any of the
three algorithms to be selected explicitly when it is more suitable for a
particular application or platform.

\section{Outlook and future work}
\label{sec:outlook}

The two-call alternating-product recursion presented here gives a uniform
quasilinear method for multiplication throughout the standard real
Cayley--Dickson tower, together with an explicit $O(N\log N)$ bilinear upper
bound and a linear-workspace implementation. No optimality claim or matching
lower bound is presented. It remains natural to ask whether the logarithmic
overhead can be reduced and how best to combine the recursion with specialized
low-dimensional base cases.

Several practical questions are likewise left open. A forward-error analysis
would clarify the numerical behavior of the transformed recursion, while
cache-aware scheduling and accelerator implementations could change the observed
crossover points. Broader benchmark campaigns are needed before drawing
conclusions across hardware or input distributions. Finally, integrating the
present kernel into applications that already use higher Cayley--Dickson
arithmetic would test whether its isolated multiplication gains translate into
end-to-end improvements.

\section{Statements and disclosures}
\label{sec:disclosures}

\subsection{The TANUKI-CD program}

This work is presented as part of TANUKI-CD, a research program created by
the author to organize and promote the study of Cayley--Dickson algebras.
The program website is available at
\begin{center}
\url{https://www.tanuki-cd.com}.
\end{center}
The program's objectives and methodology, together with its code, research
directions, and papers, are collected there.

\subsection{Use of generative AI}

Generative AI is a substantial part of the author's research process. The author
made use of OpenAI's GPT-5.6 Sol and GPT-6 Astra, and Anthropic's Claude Fable 5
and 5.1 in the development of this paper.

These models were used to assist with literature search and review, identify
open problems, suggest research directions, explain and analyze papers, assist
with drafting and editing, discuss proof ideas, suggest lemmas and consequences,
outline proofs, and develop computational tests of results. They were also used
substantively during the development of proofs, in some cases proposing key
arguments or generating candidate proofs in response to mathematical problems
posed by the author.

\subsection{Statement of responsibility}

All results presented in this paper have been independently verified by the
author. The author takes full responsibility for the contents of the paper,
including its mathematical claims, arguments, citations, and reported
computations.

\printbibliography

\end{document}